\def\real{\mathbb R}
\def\vecsp{{\mathbb V}}
\def\graph{{\mathcal G}}
\def\vertexset{{\mathcal V}_{\graph}}
\def\edgeset{{\mathcal E}_{\graph}}
\def\gbar{\overline \graph }
\def\jmp{\alpha (k)}
\def\alg{{\mathcal A}}
\def\dop{{\mathcal L}}
\def\wt{\widetilde}
\title{Metric Graphs \\
with Totally Disconnected Boundary}
\author{Robert Carlson
\\Department of Mathematics
\\University of Colorado at Colorado Springs
\\Colorado Springs, Colorado USA
\\rcarlson@uccs.edu}
\date{2020}
\begin{document}


\newtheorem{thm}{Theorem}[section]
\newtheorem{cor}[thm]{Corollary}
\newtheorem{lem}[thm]{Lemma}
\newtheorem{prop}[thm]{Proposition}
\newtheorem{ax}{Axiom}

\theoremstyle{definition}
\newtheorem{defn}{Definition}[section]

\theoremstyle{remark}
\newtheorem{rem}{Remark}[section]
\newtheorem*{notation}{Notation}


\newcommand{\thmref}[1]{Theorem~\ref{#1}}
\newcommand{\secref}[1]{\S\ref{#1}}
\newcommand{\lemref}[1]{Lemma~\ref{#1}}
\newcommand{\propref}[1]{Proposition~\ref{#1}}
\newcommand{\figref}[1]{Figure~\ref{#1}}

\numberwithin{equation}{section}



\maketitle

\begin{abstract}

Boundary analysis is developed for a rich class of generally infinite weighted graphs with compact metric completions. 
These graph completions have totally disconnected boundaries.
The classical notion of $\epsilon$-components and the existence of suitable measures are used to construct generalized Haar bases 
and Hilbert spaces of functions on the boundaries.  Suitable exit measures are constructed and analyzed using harmonic functions.  

\end{abstract}

Keywords:  totally disconnected metric graph boundary, generalized Haar basis, harmonic functions on graphs, Dirichlet to Neumann map

MSC-class: 34B45 (primary, 05C63, 51F99 (secondary)


\newpage

\section{Introduction}

Let $\graph $ denote a connected, locally finite metric graph with a countable vertex set $\vertexset$ and edge set $\edgeset$.
Our focus is primarily on infinite graphs $\graph $, although finite graphs are not excluded.  
Edges $e \in \edgeset $ are assigned a length $l_e$ and are identified with a real interval $[a_e,b_e]$ of length $l_e$. 
In this work $0 < l_e < \infty $.  The lengths of edges and their subsets are extended to path lengths by addition as usual.
The distance $d(x_1,x_2)$ between points $x_1$ and $x_2$ in $\graph $ is defined as     
the infimum of the lengths of paths joining $x_1$ and $x_2$.   

The boundary $\partial \graph$ of $\graph $ is defined to be the set of vertices with degree $1$.
(This definition will later be extended for explicitly finite graphs.)
Define the interior $\graph _{int}$ of ${\graph}$ to be the complement of the boundary vertices.
As a metric space, ${\graph}$ has a completion $\gbar $, which is assumed to be compact.   
The boundary of $\gbar $ will be $\partial \gbar  = \gbar  \setminus \graph _{int}$.

This work treats two types of questions.  The first aims to understand which compact metric graph completions $\gbar$
have totally disconnected boundary $\partial \gbar$.    
One result is that $\partial \gbar $ is totally disconnected if and only if $\gbar $ is weakly connected,
which means that any two distinct points of $\gbar $ can be separated by the removal of a finite set of points from $\graph $. 
Another result shows that $\partial \gbar $ is totally disconnected if and only if $\gbar $ is homeomorphic to its end compactification.
Totally disconnected compact metric spaces have a rich collection of
clopen sets, that is sets which are both open and closed.  This fact is an essential element of the subsequent analysis. 

The second type of question considers the foundations for function theory on the boundary $\partial \gbar$.
When $\partial \gbar$ is totally disconnected, several constructions from the theory of totally disconnected compact metric spaces
can be used to good effect.   The classical notion of $\epsilon$-components provides a canonical sequence of nested partitions of $\partial \gbar$, 
each partition consisting of finitely many clopen sets.  Given a measure $\mu $ on $\partial \gbar$ which is positive on nonempty clopen sets, 
one may construct a generalized Haar basis from functions which are constant on the sets of the partition, yielding  
an orthonormal basis for the Hilbert space $L^2(\mu)$.

The developments continue by exploiting work on the Dirichlet problem \cite{Carlson08,Carlson12}.
The Dirichlet to Neumann map is used to construct exit measures on $\partial \gbar$ which are suited to the earlier Hilbert space constructions.
Exit measures are shown to assign positive weight to nonempty clopen sets.
Compressed Dirichlet to Neumann maps based on finite clopen partitions of the boundary $\partial \gbar $ are the limits
of corresponding maps on finite subgraphs of $\graph $.

Metric graphs $\graph$ are closely related to reversible Markov chains and resistor networks, which have an extensive probability literature \cite{Doyle, LP,Woess},
Probabilistic considerations lead to a general boundary theory, which seems rather abstract.   
Here, harmonic functions and metric space methods are used to develop our main themes linking totally disconnected boundaries, 
Hilbert spaces of functions on the boundary, and the generalized Haar bases suitable for multiresolution analysis. 
Work with overlapping themes can be found in \cite{Kigami}, where there is an extensive analysis for tree networks using probabilistic methods.

The last section of this work will make use of results from \cite{Carlson08,Carlson12}.
Note that Proposition 4.8 in \cite{Carlson12} is not correct, but this does not effect the rest of the work.  
The author gratefully acknowledges an assist from Greg Morrow, 
who pointed out the relevance of Kakutani's Theorem \cite{Kakutani}.

\section{Metric graph boundary}

The metric graph completions $\gbar $ studied here will satisfy two main conditions.  
First, $\gbar $ is compact.  Since $\gbar$ must be totally bounded, $\gbar$ is compact if and only if for every $\epsilon > 0$ 
there is a subgraph $\graph _{0}$ of $\graph $, with finitely many edges and vertices,
such that for every $y \in \graph $ there is an $x \in \graph _{0}$ with $d(x,y) < \epsilon $.

The second condition is that $\gbar $ is weakly connected.  This less standard condition
means that for every pair of distinct points $x,y \in \gbar $,
there is a finite set of points $W = \{ w_1,\dots ,w_K \} $ in the graph $\graph $ 
separating $x$ from $y$.  That is, there are disjoint open sets $U_x,V_y$ with $x \in U$ and $y \in V$ such that
$\gbar \setminus W = U_x \cup V_y$.

Metric graphs $\graph $ with finite volume, that is $\sum_{e \in \edgeset} l_e < \infty$,
have compact and weakly connected completions \cite{Carlson08}. 
Completions of metric trees are weakly connected, so if compact (which is not always the case), they satisfy our conditions. 
The finite volume condition is not necessary; in particular it is easy to construct metric trees having infinite volume, but with compact completions.  

A key role in the analysis of $\gbar$ and $\partial \gbar$ is played by the rich collection of clopen sets in $\partial \gbar$, 
that is subsets of $\partial \gbar$ which are both open and closed.
The discussion begins with a result about partitions of a totally disconnected compact metric space $\Omega $.
The boundary $\partial \gbar$ is a closed set, so $\partial \gbar$ is compact if $\gbar$ is.

Suppose ${\cal E }_1$ and ${\cal E }_2$ are partitions of $\Omega$.
Partition ${\cal E }_2$ is a refinement of $ {\cal E }_1$ if each set in ${\cal E }_2$ is a subset of a set in $ {\cal E }_1$.
A version of the next result is in \cite[p. 97]{Hocking}.

\begin{prop} \label{manycor}
Suppose $\Omega $ is a totally disconnected compact metric space.  For any $\epsilon > 0$,
there is a finite partition ${\cal E} = \{ E(n), n=1, \dots ,N \}$ of $\Omega $ by clopen sets such that 
the diameter of each $E(n) \in {\cal E}$ is less than $\epsilon $.

There is a sequence $\{ {\cal E }_j, j = 1,2,3,\dots \} $ of partitions of $\Omega $ by clopen sets, with ${\cal E }_{j+1}$ a refinement of ${\cal E }_j$,
such that the diameter of each set in $ {\cal E }_j $ is less than $2^{-j}$. 
\end{prop}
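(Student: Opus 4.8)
The plan is to reduce both assertions to the single structural fact that a compact totally disconnected metric space $\Omega$ is \emph{zero-dimensional}, meaning that the clopen sets form a basis for its topology. First I would establish this. Because $\Omega$ is compact Hausdorff, the quasicomponent of each point (the intersection of all clopen sets containing it) coincides with its connected component; total disconnectedness forces the components to be singletons, so $\bigcap \{ C : C \text{ clopen}, x \in C \} = \{x\}$ for every $x \in \Omega$. Given an open neighborhood $U$ of $x$, I would separate $x$ from the compact set $\Omega \setminus U$: for each $y \notin U$ pick a clopen $C_y$ containing $x$ but not $y$, cover $\Omega \setminus U$ by the open sets $\Omega \setminus C_y$, extract a finite subcover indexed by $y_1, \dots, y_m$, and set $C = \bigcap_i C_{y_i}$. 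Then $C$ is clopen with $x \in C \subseteq U$, so every point of $\Omega$ has arbitrarily small clopen neighborhoods.

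For the first assertion I would cover $\Omega$ using this basis. For each $x$ choose a clopen set $C_x$ with $x \in C_x \subseteq B(x, \epsilon/3)$, so that the diameter of $C_x$ is less than $\epsilon$. These clopen sets form an open cover of the compact space $\Omega$, so finitely many $C_{x_1}, \dots, C_{x_m}$ already cover it. To convert this cover into a partition I would disjointify: set $E(1) = C_{x_1}$ and $E(n) = C_{x_n} \setminus (C_{x_1} \cup \cdots \cup C_{x_{n-1}})$ for $n > 1$, discarding the empty sets and relabeling. Each $E(n)$ is a finite Boolean combination of clopen sets, hence clopen; the $E(n)$ are pairwise disjoint and cover $\Omega$; and since $E(n) \subseteq C_{x_n}$, each has diameter less than $\epsilon$. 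This produces the desired finite clopen partition $\mathcal{E}$.

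For the second assertion I would build the nested sequence by common refinement rather than by a fresh construction at each stage. Applying the first part with $\epsilon = 2^{-j}$ yields, for each $j$, a finite clopen partition $\mathcal{F}_j$ whose members have diameter less than $2^{-j}$. I then define $\mathcal{E}_j$ to be the collection of nonempty sets of the form $F_1 \cap F_2 \cap \cdots \cap F_j$ with $F_i \in \mathcal{F}_i$. Each such intersection is clopen, the $\mathcal{E}_j$ partition $\Omega$, and every member of $\mathcal{E}_{j+1}$ is by construction contained in a member of $\mathcal{E}_j$, so $\mathcal{E}_{j+1}$ refines $\mathcal{E}_j$. Finally, each set of $\mathcal{E}_j$ lies inside some $F_j \in \mathcal{F}_j$ and hence has diameter less than $2^{-j}$, as required.

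The substance of the argument is concentrated entirely in the zero-dimensionality step, and this is the point I would treat most carefully; compactness enters there in an essential way, since the equality of components and quasicomponents can fail without it. Everything afterward—covering, disjointification, and common refinement—is routine Boolean manipulation of clopen sets.
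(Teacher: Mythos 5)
Your proof is correct and complete. One thing worth knowing: the paper does not actually prove this proposition---it defers to Hocking and Young (the citation before the statement)---so there is no in-paper argument to compare against; your write-up supplies the standard proof that the citation points to. The covering, disjointification, and common-refinement steps are written out fully and are all correct, and the compactness argument upgrading ``quasicomponents are singletons'' to ``clopen sets form a basis'' is also complete. The one ingredient you invoke without proof is the equality of components and quasicomponents in a compact Hausdorff space (the \v{S}ura-Bura lemma); that is a genuine theorem rather than a formality, and it is exactly where the content of ``totally disconnected $\Rightarrow$ zero-dimensional'' resides. Since it is standard textbook material---and since the paper itself outsources the entire proposition to a textbook---relying on it is reasonable, but a fully self-contained proof would have to include it, and its proof is another compactness argument of the same flavor as the separation step you did write out. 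Your closing remark that compactness is essential is on target: total disconnectedness does not imply zero-dimensionality in general (Erd\H{o}s space is the classical counterexample), so no proof can dispense with compactness at that point.
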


The next result characterizes the compact completions $\gbar $ with totally disconnected boundary.
Introduce the notation $B_{\epsilon}(z)$ for the open ball of radius $\epsilon > 0$ centered at $z \in \gbar $, 
while the $\epsilon $ neighborhood of a set $E \subset \gbar$ is  
\[N_{\epsilon}(E) = \bigcup_{z \in E} B_{\epsilon}(z) .\]

\begin{thm} \label{tdeqwc}
If $ \gbar $ is compact,  then $\partial \gbar $ is totally disconnected if and only if 
$\gbar $ is weakly connected.
\end{thm}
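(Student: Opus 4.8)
The plan is to prove the two implications separately, using the rich supply of clopen sets in $\partial\gbar$ for one direction and a level-set (metric sphere) construction for the other. Throughout I would use that in a compact Hausdorff space total disconnectedness is equivalent to total separatedness, so it suffices to separate distinct points of $\partial\gbar$ by sets that are clopen in $\partial\gbar$; \propref{manycor} supplies such sets explicitly.

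For the direction ``weakly connected $\Rightarrow\partial\gbar$ totally disconnected'', I would take distinct $x,y\in\partial\gbar$ and a finite $W\subset\graph$ with $\gbar\setminus W=U_x\cup V_y$ as in the definition. First I would reduce to $W\subset\graph_{int}$: a degree-$1$ vertex in $W$ can be deleted from $W$ and absorbed into whichever of $U_x,V_y$ contains the rest of its unique edge, since its only neighborhoods run along that edge, and this keeps $U_x,V_y$ open, disjoint, and covering. With $W\cap\partial\gbar=\emptyset$, intersecting with $\partial\gbar$ yields $\partial\gbar=(U_x\cap\partial\gbar)\cup(V_y\cap\partial\gbar)$, a partition into two disjoint relatively open, hence relatively clopen, sets separating $x$ from $y$. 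Consequently no connected subset of $\partial\gbar$ can contain two points, so $\partial\gbar$ is totally disconnected.

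For the converse, assume $\partial\gbar$ totally disconnected and fix distinct $x,y\in\gbar$ with $\delta=d(x,y)$. If at least one point, say $x$, lies in $\graph_{int}$, then $\rho=d(x,\partial\gbar)>0$ because $\partial\gbar$ is compact and $x\notin\partial\gbar$; I would set $f(\cdot)=d(\cdot,x)$ and cut along the sphere $\{f=r\}$ for some $r<\min(\rho,\delta)$. If instead both $x,y\in\partial\gbar$, I would apply \propref{manycor} with $\epsilon<\delta$ to obtain a clopen $K\subset\partial\gbar$ with $x\in K$ and $\mathrm{diam}(K)<\delta$, so $y\notin K$; then $K$ and $\partial\gbar\setminus K$ are disjoint nonempty compact sets, whence $\eta=d(K,\partial\gbar\setminus K)>0$, and I would cut along $\{f=r\}$ with $f(\cdot)=d(\cdot,K)$ and $r=\eta/2$. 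In either case $U_x=\{f<r\}$ and $V_y=\{f>r\}$ are disjoint open sets with union $\gbar\setminus\{f=r\}$, and $x\in U_x$, $y\in V_y$, so $W=\{f=r\}$ witnesses weak connectedness, provided $W$ is finite and contained in $\graph$.

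The crux, and the step I expect to be the main obstacle, is showing that this level set is finite and interior. The key lemma is that any compact set $R\subset\gbar$ disjoint from $\partial\gbar$ meets only finitely many edges: each interior point of a locally finite graph has a neighborhood meeting finitely many edges, so a finite subcover of $R$ suffices. In the interior case I would take the band $\{\,r/2\le f\le (r+\rho)/2\,\}$, and in the boundary case the band $\{\,\eta/4\le f\le 3\eta/4\,\}$; each is closed, hence compact, and disjoint from $\partial\gbar$ because the values of $f$ on $\partial\gbar$ (namely $f\ge\rho$, respectively $f\in\{0\}\cup[\eta,\infty)$) avoid the band, while the band contains $\{f=r\}$. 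Thus the band meets finitely many edges, and on each such edge $e$, since the base point $x$ (respectively the set $K$) is disjoint from the interior of $e$, writing $t$ for arclength from $a_e$ the function has the form $f=\min(\beta_a+t,\ \beta_b+(l_e-t))$ with constants $\beta_a,\beta_b$, a minimum of two affine functions of slope $\pm1$; hence $\{f=r\}\cap e$ has at most two points and $W$ is finite. Finally $W\subset\graph_{int}\subset\graph$ because $\{f=r\}$ lies in the band. The care needed in choosing the band genuinely bounded away from $\partial\gbar$, and in verifying the piecewise-linear form of $f$, is where the argument must be made rigorous.
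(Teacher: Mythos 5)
Your proof is correct, and while it shares the paper's overall skeleton --- the forward direction via the clopen traces $U_x\cap\partial\gbar$ and $V_y\cap\partial\gbar$, and the converse via \propref{manycor} together with the fact that a compact subset of $\gbar$ disjoint from $\partial\gbar$ meets only finitely many edges --- your construction of the separating set $W$ in the converse is genuinely different. The paper takes disjoint clopen sets $E_x,E_y$ covering $\partial\gbar$, traps $\gbar\setminus[N_\epsilon(E_x)\cup N_\epsilon(E_y)]$ inside a finite union $S$ of closed edges, and cuts at the \emph{vertices} of $S$, exhibiting the complementary open set as $U_y$ together with the open edges of $S$; there finiteness of $W$ is automatic, but one must verify that the reassembled complement is open. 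You instead cut along a metric sphere $\{d(\cdot,K)=r\}$ (or $\{d(\cdot,x)=r\}$), and finiteness of $W$ comes from the piecewise-linear, slope $\pm1$, form of distance functions on edges, so that the level set meets each of the finitely many relevant edges in at most two points; this buys a cleaner pair of open sets $\{f<r\}$, $\{f>r\}$ at the price of the band/level-set analysis. Two small repairs are needed in your write-up: (i) in the interior case, if $x$ lies in the interior of an edge $e$, that edge can certainly meet your band, so your justification ``the base point is disjoint from the interior of $e$'' fails for that one edge; however $f|_e$ is still piecewise linear with finitely many pieces (split $e$ at $x$), so $\{f=r\}\cap e$ remains finite and the conclusion stands; (ii) if $\partial\gbar=\emptyset$ (a finite graph without leaves) the quantity $\rho=d(x,\partial\gbar)$ is not defined, though the argument trivializes since every compact subset of $\gbar$ then lies in finitely many edges. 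Finally, your reduction in the forward direction to $W\subset\graph_{int}$ is a genuine improvement on the paper: the paper's claim that $U_x\cup V_y$ covers $\partial\gbar$ silently assumes $W$ contains no degree-one vertices (which the definition of weak connectedness does not guarantee), and your absorption of such vertices into the adjacent open set closes exactly that gap.
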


\begin{proof}

From the definition of weakly connected graph completions, if $x$ and $y$ are distinct points in $\partial \gbar $,
they are in the distinct clopen sets $U_x \cap \partial \gbar $ and $V_y \cap \partial \gbar $. 
Since $U_x \cup V_y = \partial \gbar$, $x$ and $y$ lie in distinct connected components.
The boundary $\partial \gbar$ is thus totally disconnected if $\gbar $ is weakly connected.  

Assume now that $\gbar $ is compact with a totally disconnected boundary.  
Consider distinct points $x,y$ in $\gbar $. 
If either $x$ or $y$ is a point of $\graph $, they are easily separated by the removal of a finite set of points in $\graph $.
If both $x$ and $y$ belong to $\partial \gbar$, then by \propref{manycor} there are disjoint clopen sets $E_x,E_y$ 
with $x \in E_x$, $y \in E_y$, and $E_x \cup E_y = \partial \gbar$.
Since $E_x$ and $E_y$ are compact and disjoint in $\gbar $, the neighborhoods $N_{\epsilon}(E_x)$ and $N_{\epsilon}(E_y)$
are disjoint if $\epsilon > 0$ is sufficiently small \cite[p. 86]{KolmF}.

Since $\gbar$ is compact, the set $\gbar \setminus [ N_{\epsilon}(E_x) \cup N_{\epsilon}(E_y)] $
is contained in a set $S$ which is the union of finitely many closed edges of $\graph $.  
Define 
\[U_x =  N_{\epsilon}(E_x) \setminus S, \quad U_y =  N_{\epsilon}(E_y) \setminus S.\]
Since $x,y \in \partial \gbar$, these sets are still open neighborhoods of $x,y$ respectively. 

Let $W$ be the set of vertices in $S$, and let $V$ be the complement of $U_x$ in $\gbar \setminus W$.
$V$ is open since it is the union of $U_y$ and the collection of open edges of $S$. 
The sets $U_x,V$ provide the desired separation of $x$ and $y$ by a finite set $W$ of points from $\graph $, showing that $\gbar $ is weakly connected.
\end{proof}

\begin{thm} \label{homeo}
Assume that the metric graphs $\graph _1, \graph _2$ have the same vertex and edge sets, but possibly different edge lengths.
Suppose the metric completions $\gbar _1,\gbar _2$ are both compact, while $\graph _1$ has finite volume.
Then $\gbar _1$ and $\gbar _2$ are homeomorphic if and only if $\gbar _2 $ is weakly connected.
\end{thm}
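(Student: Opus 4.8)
The plan is to exploit the fact that $\graph_1$ and $\graph_2$ share the same combinatorial data, so that their completions differ only through the two metrics, and to compare them through the finite cuts of $\graph$ common to both. Since $\graph_1$ has finite volume, $\gbar_1$ is compact and weakly connected, so by \thmref{tdeqwc} the boundary $\partial\gbar_1$ is totally disconnected; this is the standing hypothesis driving both implications.

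For the forward implication, suppose $\gbar_2$ is weakly connected, so $\partial\gbar_2$ is also totally disconnected by \thmref{tdeqwc}. I would build a homeomorphism $\Phi:\gbar_1\to\gbar_2$ as follows. Using compactness of both completions, for each $j$ I would choose a single finite subgraph $\graph_0^{(j)}$, with $\graph_0^{(j)}\subset\graph_0^{(j+1)}$, such that every connected component of $\graph\setminus\graph_0^{(j)}$ has diameter less than $2^{-j}$ in both $d_1$ and $d_2$; such a subgraph is obtained by taking the union of the finite subgraphs furnished by total boundedness for each metric separately. The unbounded components of $\graph\setminus\graph_0^{(j)}$ index a clopen partition of $\partial\gbar_1$ and, through the very same components, a clopen partition of $\partial\gbar_2$, the two families being in canonical bijection and nested in $j$ (this is the finite-cut description of clopen sets along the lines of the proof of \thmref{tdeqwc}, refined by \propref{manycor}). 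Because the diameters shrink in both metrics, each point of $\partial\gbar_1$ is the unique point of a nested intersection of these clopen sets, and sending it to the unique point of the corresponding intersection in $\partial\gbar_2$ defines a homeomorphism $\phi:\partial\gbar_1\to\partial\gbar_2$. On the common interior, $\graph_{int}$ carries the same $1$-complex topology as a subspace of either completion, so the identity gives a homeomorphism $\psi:\graph_{1,int}\to\graph_{2,int}$ respecting the components; I would set $\Phi=\psi$ on $\graph_{int}$ and $\Phi=\phi$ on $\partial\gbar_1$. Continuity at a boundary point $\omega$ is checked against the nested clopen neighborhoods: for each $j$ a tail $x_n\to\omega$ eventually lies in the component indexing $\omega$, so $\Phi(x_n)$ lies in the corresponding component, whose $d_2$-diameter is less than $2^{-j}$ and whose closure contains $\phi(\omega)$. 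Thus $\Phi$ is a continuous bijection from a compact space onto a Hausdorff space, hence a homeomorphism.

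For the reverse implication, suppose $\gbar_1\cong\gbar_2$. The cleanest route is the end-compactification characterization recorded in the introduction: since $\partial\gbar_1$ is totally disconnected, $\gbar_1$ is homeomorphic to the end compactification of $\graph$, which is assembled from the $1$-complex topology of $|\graph|$ together with the combinatorial ends, both independent of the edge lengths. Hence $\gbar_2\cong\gbar_1\cong\mathrm{End}(\graph)=\mathrm{End}(\graph_2)$, and applying the same characterization to $\graph_2$ shows $\partial\gbar_2$ is totally disconnected, so $\gbar_2$ is weakly connected by \thmref{tdeqwc}. If one wishes to avoid that characterization, one must instead argue directly that the homeomorphism carries $\partial\gbar_1$ onto $\partial\gbar_2$, and this is where the main obstacle lies: the interior is not separated from the boundary by naive local invariants, since cycles of $\graph$ can accumulate at an interior point and destroy every finite-star neighborhood, even while the closedness of $\partial\gbar$ prevents boundary points from accumulating at interior points. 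The delicate step is therefore to isolate a genuinely topological description of $\graph_{int}$ — equivalently, to show that weak connectedness of a compact completion is itself a topological property — and I expect this identification, rather than the construction of $\Phi$, to be the crux of the argument.
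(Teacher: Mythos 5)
There are genuine gaps in both directions of your proposal.

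\emph{The direction ``weakly connected $\Rightarrow$ homeomorphic.''} Your construction hangs entirely on the claim that there exist nested finite subgraphs $\graph_0^{(j)}$ such that every component of $\graph\setminus\graph_0^{(j)}$ has diameter less than $2^{-j}$ in both metrics, and you justify this by total boundedness alone (``the union of the finite subgraphs furnished by total boundedness''). That justification fails: an $\epsilon$-dense finite subgraph controls distances to the subgraph, not the diameters of complementary components. Indeed, if compactness alone produced such subgraphs, every compact completion would have totally disconnected boundary -- the closures of the complementary components cut $\partial\gbar$ into finitely many disjoint closed sets of diameter at most $2^{-j}$ (closures of distinct components can meet only inside the finite subgraph), and a compact metric space admitting such partitions for every $\epsilon$ is totally disconnected -- so by \thmref{tdeqwc} the weak connectedness hypothesis of \thmref{homeo} would be vacuous. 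It is not: take the binary tree with edges between levels $n$ and $n+1$ of length $4^{-n}$, plus edges of length $2^{-n}$ joining consecutive (in dyadic order) vertices of level $n$; its completion is compact, its boundary is an arc, and no finite subgraph has all complementary components of small diameter. The statement you need is true in your setting, but only because both completions are weakly connected, and it must be extracted the way the paper extracts such facts (proof of \thmref{tdeqwc}, \propref{partition}): partition $\partial\gbar_i$ into clopen sets of diameter less than $\epsilon$ by \propref{manycor}, pass to $\epsilon$-neighborhoods, and use compactness to place the complement of these neighborhoods inside finitely many closed edges; those edges serve as $\graph_0^{(j)}$, and each complementary component then lies in a single neighborhood. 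Doing this for each metric and taking the union of the two finite subgraphs gives smallness in both. So your plan is workable, but its crucial step is attributed to the wrong hypothesis and, as written, is unsupported.

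\emph{The direction ``homeomorphic $\Rightarrow$ weakly connected.''} You leave this unfinished, and the route you sketch is circular: the characterization ``homeomorphic to the end compactification iff compact and weakly connected'' is the paper's Corollary, deduced from \thmref{homeo} itself. What Georgakopoulos gives non-circularly is an abstract homeomorphism $\gbar_2\cong\gbar_1\cong{}$end compactification, and this says nothing about the subset $\partial\gbar_2$ unless the homeomorphism is known to match $\partial\gbar_2$ with the ends -- exactly the ``crux'' you flag and do not resolve. But that crux is a red herring: no topological characterization of $\graph_{int}$ is needed. Weak connectedness is phrased as separation by deletion of finite point sets, so it transports through a homeomorphism $h$ directly: pull two points back to $\gbar_1$, separate there by a finite $W\subset\graph_1$ (using only finite volume of $\graph_1$), and push forward; $h(W)$ is a finite subset of $\gbar_2$ separating the given points. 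The one remaining wrinkle -- which the paper also passes over silently -- is that $h(W)$ may contain completion points; it is repaired by noting that completion points are never needed in a separating set: if a finite $W'\subset\gbar_2$ separates $x$ from $y$, so does $W'\cap\graph_2$. For each component $G$ of $\graph_2\setminus(W'\cap\graph_2)$, the set $\overline{G}\setminus W'$ is connected (it contains $G$ as a dense connected subset), so it lies wholly on one side of the separation; and closures of distinct components meet only at points of $W'\cap\graph_2$, since two components accumulating at a common point $p$ outside that set would be joined by graph paths of length tending to zero, each crossing $W'\cap\graph_2$, forcing $p$ into the closure of that finite set. Grouping the component closures by side then separates $\gbar_2\setminus(W'\cap\graph_2)$. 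This short transfer is the paper's entire argument for this direction; it is far simpler than what you anticipate.

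For comparison, the paper's proof of the direction you do attempt is also quite different: it normalizes so each edge is shorter in $\graph_1$, takes the map $\graph_2\to\graph_1$ that is the identity on vertices and affine on edges, observes it is distance-reducing, extends it to $\gbar_2\to\gbar_1$ by uniform continuity, gets surjectivity from compactness, gets injectivity on the boundary from weak connectedness of $\gbar_2$, and finishes with the fact that a continuous bijection of compact metric spaces is a homeomorphism. Your partition scheme, once the cut lemma is properly derived from weak connectedness, would yield an independent but longer proof in the spirit of the end-compactification picture.
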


\begin{proof}
First, suppose $\gbar _1$ and $\gbar _2$ are homeomorphic.  Since $\gbar _1$ is weakly connected \cite[Thm 2.9]{Carlson08},
any distinct points $x,y$ in $\gbar _1$ may be separated by the removal of a finite set of points in $\graph _1$.
This separation is preserved under homeomorphism, so $\gbar _2$ is weakly connected.

In the other direction, assume $\gbar _2$ is weakly connected. 
Without loss of generality, assume that for each edge, the length in $\graph _1$ is smaller than the length of the corresponding edge in $\graph _2$.

Suppose a (directed) edge $e \in \graph _1$ is identified with a copy of $[0,a_e]$, while the corresponding edge in $\graph _2$
is identified with a copy of $[0,b_e]$.  The map $\phi : \graph _2 \to \graph _1$ which is the identity on vertices, but takes
$x \in [0,b_e]$ to $a_ex/b_e \in [0,a_e]$ is a distance reducing continuous bijection.
Since $\phi $ is uniformly continuous, it extends continuously \cite[p. 149]{Royden} to a map $\phi :\gbar _2 \to \gbar _1$
which is also distance reducing.

If $x_1 \in \gbar _1 \setminus  \graph _1$,  then there is a sequence $s_1 = \{ v_k \}$ of vertices in $\graph _1$ with limit $x_1$.  
The sequence $s_1$ lifts to a sequence $s_2 = \{ \phi ^{-1} (v_k) \}$ in $\graph _2$.  
Since $\gbar _2$ is compact, $s_2$ has a subsequence $s_3$ with a limit $y \in \gbar _2$ .
Since $\phi $ is distance reducing, $\phi (y)$ must be the limit of the subsequence $\phi (s_3)$ of $s_1$.  
That is, $\phi (y) = x_1$, and $\phi :\gbar _2 \to \gbar _1$ is surjective. 

Suppose $y_1,y_2$ are distinct points in $\partial \gbar _2$.  
Since $\gbar _2$ is weakly connected, there is a finite set of points $W = \{x_1,\dots ,x_K \} \subset \graph _2$ separating $y_1$ and $y_2$.
Let $U,V$ be disjoint and open, with $U \cup V = \gbar _2 \setminus W$, and with $y_1 \in U$ and $y_2 \in V$.  
For any $\epsilon > 0$ there are points $\xi _1,\xi _2 \in \graph _2$ such that $\xi _1 \in U$, $\xi _2 \in V$,
$d_2(\xi _1,y_1) < \epsilon $ and  $d_2(\xi _2,y_2) < \epsilon $.
Let $\{ e_j, j = 1,\dots ,J\} \subset \graph _2$ be the set of edges containing at least one point $x_k \in W$.
If $\epsilon $ is sufficiently small, any path from $\xi _1$ to $\xi _2$ must include at least one edge $e_j$.

Since $\phi (e_j) \subset \graph _1$, there is a $\sigma > 0$ such that $d_1 (\phi (\xi _1),\phi (\xi _2)) > \sigma $.  Pick $\epsilon < \sigma /2$.
Since $\phi $ is distance reducing, $d_1(\phi (y_1),\phi (\xi _1) ) < \epsilon  $ and  $d_1(\phi (y_2),\phi (\xi _2) ) < \epsilon  $, so
\[d_1(\phi (y_1),\phi (y_2) ) > \sigma - 2 \epsilon > 0,\]
and $\phi : \gbar _2 \to \gbar _1$ is injective.

Since a continuous bijection of compact metric spaces is a homeomorphism \cite[p. 94]{KolmF} \cite[p. 103]{Hocking}, 
$\phi $ is  a homeomorphism.
 
\end{proof}

Georgakopoulos \cite[Thm 1.1]{Georg11} recognized that completions $\gbar $ with finite volume 
are homeomorphic to the end compactification \cite[p.226]{Diestel} of $\graph $, a compactification 
developed in \cite{Freud} and \cite{Halin}.  
The next corollary follows from \thmref{homeo} and this result.
 
\begin{cor}
$\gbar $ is homeomorphic to the end compactification of $\graph $ if and only if
$\gbar $ is compact and weakly connected.
\end{cor}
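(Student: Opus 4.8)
The plan is to use \thmref{homeo} together with Georgakopoulos's result as a pivot, after first observing that the end compactification is a combinatorial invariant. I would begin by fixing a new assignment of edge lengths on the same vertex and edge sets as $\graph$: since $\edgeset$ is countable, I enumerate the edges and set the length of the $n$-th edge to $2^{-n}$ (or any positive summable sequence), producing a metric graph $\graph _1$ with $\sum_{e} l_e < \infty$. By the finite-volume result \cite{Carlson08}, the completion $\gbar _1$ is then compact (and weakly connected). Writing $\graph _2 = \graph$ and $\gbar _2 = \gbar$, the two metric graphs $\graph _1, \graph _2$ share vertex and edge sets, so they are precisely in the situation covered by \thmref{homeo}.

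The crucial preliminary observation is that the end compactification depends only on the combinatorial graph $(\vertexset,\edgeset)$ and not on the edge lengths: ends are equivalence classes of rays under the relation of remaining in a common infinite component after deleting finite vertex sets, which is defined purely combinatorially \cite{Diestel}. Hence the end compactifications of $\graph _1$ and of $\graph _2 = \graph$ are homeomorphic; I denote this common space $\widehat{\graph}$. By Georgakopoulos's theorem \cite{Georg11}, since $\graph _1$ has finite volume, $\gbar _1$ is homeomorphic to $\widehat{\graph}$.

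For the forward direction, I would assume $\gbar$ is homeomorphic to the end compactification $\widehat{\graph}$. Since a compactification is compact and compactness is a homeomorphism invariant, $\gbar$ is compact, which licenses applying \thmref{homeo}. Because $\gbar _2 = \gbar \cong \widehat{\graph} \cong \gbar _1$, the completions $\gbar _1$ and $\gbar _2$ are homeomorphic, so \thmref{homeo} forces $\gbar _2 = \gbar$ to be weakly connected. For the converse, I would assume $\gbar$ is compact and weakly connected. Then both $\gbar _1$ and $\gbar _2 = \gbar$ are compact, $\graph _1$ has finite volume, and $\gbar _2$ is weakly connected, so \thmref{homeo} yields $\gbar _1 \cong \gbar _2$. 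Combined with $\gbar _1 \cong \widehat{\graph}$, this gives $\gbar \cong \widehat{\graph}$, completing the argument.

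The step I expect to be the main obstacle is justifying rigorously that the end compactification is insensitive to the choice of edge lengths, that is, that $\widehat{\graph}$ is well defined independent of the metric. This requires care about whether ``end compactification'' is read in the purely graph-theoretic sense \cite{Diestel} or as a topological compactification of the metric realization of $\graph$; one must confirm that the homeomorphism type is unchanged across the two finite-length assignments and that it matches the space to which Georgakopoulos's theorem applies. Once that identification is secured, everything else is a direct bookkeeping application of \thmref{homeo} in its two directions.
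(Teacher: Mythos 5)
Your proposal is correct and follows essentially the same route the paper intends: the paper derives the corollary directly from Theorem~\ref{homeo} combined with Georgakopoulos's theorem, exactly as you do. Your explicit construction of the finite-volume re-metrization and the observation that the end compactification is independent of edge lengths simply fill in details the paper leaves implicit, and both are handled correctly.
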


Again assume that $ \gbar $ is compact and weakly connected.  For $\epsilon > 0$ let $\graph _{\epsilon}$ denote the finite subgraph of $\graph $ 
whose (closed) edges are those containing a point whose distance from $\partial \gbar $ is at least $\epsilon $.
The vertices of $\graph _{\epsilon}$ are inherited from these edges.
Define the relative boundary of $\graph  _{\epsilon}$ in $\graph $ to be the set of vertices $v \in \graph _{\epsilon} $ 
such that either (i) $v$ is a leaf of $\graph $, or
(ii) $v$ has an incident edge $e_v \in \graph \setminus \graph _{\epsilon}$.

As noted in the proof of \thmref{tdeqwc}, 
if ${\cal E} = \{ E(n),n = 1,\dots ,N \}$ is any finite partition of $\partial \gbar $ by clopen sets,
and $\epsilon > 0$ is sufficiently small, then the $\epsilon $ neighborhoods $N_{\epsilon}(E(n))$ are pairwise disjoint.

\begin{prop} \label{partition}
Suppose ${\cal E} = \{ E(n), n = 1,\dots ,N \}$ is a finite partition of $\partial \gbar $ by clopen sets, while $\epsilon > 0$ is sufficiently small.  
Then every point $v$ in the relative boundary of $\graph _{\epsilon}$ is connected to 
$\partial \gbar$ by a path $\gamma $ containing no edge of $\graph _{\epsilon}$,
and every such path starting at $v$ lies in the same set $N_{\epsilon}(E(n))$
\end{prop}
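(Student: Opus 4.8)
The plan is to prove the two assertions separately, relying on the distance-to-boundary function $f(x) = d(x,\partial \gbar)$, which is continuous on the compact space $\gbar$ and vanishes exactly on the closed set $\partial \gbar$. The structural fact I would establish first is that for any vertex $u$ of $\graph$ one has $f(u) = \min_{u' \sim u}\bigl(l_{[u,u']} + f(u')\bigr)$, the minimum taken over the finitely many neighbors $u'$ of $u$. The inequality $\le$ comes from going to a neighbor and then to the boundary, while $\ge$ holds because every path leaving the vertex $u$ first traverses a full incident edge $[u,u']$ and then has length at least $f(u')$; local finiteness guarantees the minimum is attained. In particular, whenever $f(u) > 0$ there is an incident edge $e = [u,u']$ with $f(u') = f(u) - l_e < f(u)$, along which $f$ decreases linearly from $f(u)$ to $f(u')$.

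For the existence of the path I would treat the two cases in the definition of the relative boundary. If $v$ is a leaf of $\graph$ then $v \in \partial \gbar$ and the trivial path suffices. Otherwise $v$ has an incident edge $e_v \in \graph \setminus \graph_{\epsilon}$; since no point of $e_v$ lies at distance $\ge \epsilon$ from $\partial \gbar$, in particular $f(v) < \epsilon$. Starting from $v$ I would then descend: at each vertex $u$ with $f(u) > 0$ pass along a descending edge $e$ to a neighbor with strictly smaller $f$, noting that on such an edge $f \le f(u) < \epsilon$ everywhere, so $e \notin \graph_{\epsilon}$. This produces a simple path $v = u_0, u_1, u_2, \dots$ with $f(u_k)$ strictly decreasing, using only edges outside $\graph_{\epsilon}$. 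Either it reaches a vertex with $f = 0$, hence a point of $\partial \gbar$, in finitely many steps, or it is infinite.

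The main obstacle is showing that the infinite descent converges to a boundary point rather than stalling in the interior. Here I would use that the total length $\sum_k l_{e_k} = f(v) - \lim_k f(u_k) \le f(v) < \infty$, so the path is Cauchy and, by compactness of $\gbar$, converges to some $\xi$ with $f(\xi) = \lim_k f(u_k)$. Because the $u_k$ are distinct vertices accumulating at $\xi$, local finiteness forbids $\xi \in \graph$, so $\xi \in \gbar \setminus \graph \subseteq \partial \gbar$ and hence $f(\xi) = 0$; the path reaches $\partial \gbar$ as claimed. (The fact that the descent never gets stuck, i.e.\ that $f$ has no positive local minimum at a vertex, is exactly the content of the min-over-neighbors formula together with $l_e > 0$.)

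Finally, for the claim that every such path lies in a single $N_{\epsilon}(E(n))$, I would argue as follows. Every edge of $\graph \setminus \graph_{\epsilon}$ has all of its points within $\epsilon$ of $\partial \gbar$, hence lies in $N_{\epsilon}(\partial \gbar) = \bigcup_n N_{\epsilon}(E(n))$; for $\epsilon$ sufficiently small these neighborhoods are pairwise disjoint open sets, as already noted before the statement, so each such edge, being connected, lies in exactly one $N_{\epsilon}(E(n))$. A path from $v$ avoiding $\graph_{\epsilon}$ is connected and its successive edges share vertices, so all of them, and hence the whole path, lie in the single $N_{\epsilon}(E(n))$ determined by $v$: in case (ii) the containment $v \in e_v \subseteq N_{\epsilon}(E(n))$ pins down $n$, and in case (i) the point $v \in \partial \gbar$ lies in a unique $E(n)$. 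If the path terminates at $\xi \in \partial \gbar$, then $\xi$ is a limit of points of $N_{\epsilon}(E(n))$ and must lie in $E(n)$, since otherwise $\xi \in N_{\epsilon}(E(m))$ for some $m \ne n$ would give a neighborhood of $\xi$ disjoint from $N_{\epsilon}(E(n))$, a contradiction.
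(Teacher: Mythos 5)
Your argument is correct, but it takes a more constructive route than the paper on the half of the statement that actually needs work. For the claim that every admissible path from $v$ lies in one fixed $N_{\epsilon}(E(n))$, you argue exactly as the paper does: every point of a path containing no edge of $\graph_{\epsilon}$ lies on an edge all of whose points are within $\epsilon$ of $\partial \gbar$, so the path, being connected, sits inside the pairwise disjoint union $\bigcup_n N_{\epsilon}(E(n))$ and hence inside the single piece determined by $v$. The difference is in the existence claim. The paper's proof of that part is essentially a one-line assertion: since $d(v,\partial \gbar) < \epsilon$, ``there is a path $\gamma$ from $v$ to $\partial \gbar$ in $N_{\epsilon}(E(n))$''; no construction is given. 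You supply one: the identity $f(u) = \min_{u' \sim u}\,(l_{[u,u']} + f(u'))$ for $f(x) = d(x,\partial \gbar)$, a greedy descent along minimizing edges (each of which stays below level $\epsilon$ and so avoids $\graph_{\epsilon}$), the telescoping bound $\sum_k l_{e_k} \le f(v)$, completeness of $\gbar$, and local finiteness to force the limit point into $\gbar \setminus \graph \subseteq \partial \gbar$. This makes the proposition self-contained where the paper leans on an unproved (though true, e.g.\ by geodesic existence in a compact length space) assertion, and it yields slightly more: your path is monotone for $f$ and has length at most $f(v) < \epsilon$. Two small points of care, neither a substantive gap: in the min-formula the minimum should run over incident edges rather than neighbors if parallel edges are allowed, and the claim that a path ``first traverses a full incident edge'' should really be phrased as a first-exit argument, since a path may backtrack inside the star of $u$; both fixes are routine.
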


\begin{proof}

In case (i), $v \in \partial \gbar $ and the only such path is the trivial path $\gamma (t) = v$.

Suppose (ii) holds.  Since $v$ has an incident edge which is not in  $\graph _{\epsilon}$, 
$v$ has distance less than $\epsilon $ from $\partial \gbar$.  That is, $v \in N_{\epsilon}(E_n)$ for some $n$,
and there is a path $\gamma $ from $v$ to $\partial \gbar $ in $N_{\epsilon}(E_n)$.
If $\gamma _1$ is another path from $v$ to $\partial \gbar $ containing no edge of $\graph _{\epsilon}$,
then every point of $\gamma _1$ has distance less than $\epsilon $ from $\partial \gbar $,
so stays in the set $N_{\epsilon}(E_n)$.      

\end{proof}

\section{Partitions and orthonormal bases }

Partitions and nested sequences of partitions play an important role in developing function theory for $\partial \gbar$.   
The developments in this section apply to compact, totally disconnected metric spaces $\Omega $, not just $\partial \gbar$.
For such spaces $\Omega $ there is a canonical procedure providing a nested sequence of partitions.

Using an idea that apparently goes back at least to G. Cantor \cite[p. 108]{Hocking}, define 
an $\epsilon $-chain from $y_1$ to $y_2$ to be a finite sequence of points $x_k$, $k=1,\dots ,K$ in $\Omega $ with
$x_1 = y_1$, $x_K = y_2$, and $d(x_k,x_{k+1}) < \epsilon $.   Being connected by an $\epsilon $-chain is an equivalence relation;  
call the equivalence classes $\epsilon $-components of $\Omega $.
For $\epsilon > 0$ let ${\cal C}_{\epsilon}$ denote the set of $\epsilon $-components of $\Omega $.
Note that if $\epsilon $ exceeds the diameter of $\Omega $, then ${\cal C}_{\epsilon}$ consists of the singleton $\Omega $.

For $E \subset \Omega $, let $E^c = \Omega \setminus E$ denote the complement of $E$.
If $E$ is an $\epsilon $-component of $\Omega $, then $d(x,y) \ge \epsilon $ for $x \in E$ and $y \in E^c$.
Thus both $E$ and $E^c$ are clopen.  The $\epsilon$-components provide a useful method \cite{Robins} for partitioning $\Omega $.

\begin{prop} \label{specialpart}
Suppose $\Omega $ is a nonempty, compact, totally disconnected metric space.
For each $\epsilon > 0$ the set ${\cal C}_{\epsilon}$ of $\epsilon $-components is a finite partition of $\Omega $ by clopen sets.
If $\epsilon < \sigma $, then ${\cal C}_{\epsilon }$ is a refinement of ${\cal C}_{\sigma }$.
The maximum diameter of a set in ${\cal C}_{\epsilon}$ has limit zero as $\epsilon \to 0^+$.
\end{prop}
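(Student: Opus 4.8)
The plan is to treat the three assertions in turn, leaning on the facts already recorded just before the statement: that $\epsilon$-chain connectivity is an equivalence relation (so ${\cal C}_{\epsilon}$ is automatically a partition) and that each $\epsilon$-component $E$ satisfies $d(x,y) \ge \epsilon$ for $x \in E$, $y \in E^c$ (so $E$ is clopen). What remains for the first assertion is only finiteness, which I would obtain from compactness: $\Omega$ is totally bounded, so for a given $\epsilon$ it is covered by finitely many balls of radius $\epsilon/2$. Any two points of such a ball lie within distance $< \epsilon$, hence in a single $\epsilon$-component; thus each ball is contained in one $\epsilon$-component, and since the balls cover $\Omega$ the number of $\epsilon$-components is at most the number of balls. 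For the refinement assertion I would simply note that when $\epsilon < \sigma$ every $\epsilon$-chain is also a $\sigma$-chain, so two points in a common $\epsilon$-component lie in a common $\sigma$-component; hence each set of ${\cal C}_{\epsilon}$ lies in a set of ${\cal C}_{\sigma}$.

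The substantive assertion is that $\max_{E \in {\cal C}_{\epsilon}} \mathrm{diam}(E) \to 0$, and I would argue by contradiction. If it fails, there are $\delta > 0$, a sequence $\epsilon_n \to 0^+$, and $\epsilon_n$-components $E_n$ with points $x_n, y_n \in E_n$ satisfying $d(x_n, y_n) > \delta$. Using compactness I would pass to a subsequence with $x_n \to x$ and $y_n \to y$; continuity of the metric forces $d(x,y) \ge \delta$, so $x \neq y$. The key intermediate step is to produce, for every $\eta > 0$, an $\eta$-chain from $x$ to $y$: choosing $n$ large enough that $\epsilon_n < \eta$, $d(x, x_n) < \eta$, and $d(y_n, y) < \eta$, the given $\epsilon_n$-chain from $x_n$ to $y_n$ is already an $\eta$-chain, and prepending $x$ and appending $y$ yields an $\eta$-chain from $x$ to $y$.

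To close the contradiction I would separate $x$ and $y$ by a clopen set. Applying \propref{manycor} with parameter $\delta$ gives a finite clopen partition of $\Omega$ whose sets have diameter $< \delta$; the member $U$ of this partition containing $x$ cannot contain $y$ (else $d(x,y) < \delta$), so $U$ is clopen with $x \in U$ and $y \in U^c$. Both $U$ and $U^c$ are compact and disjoint, hence $\rho := d(U, U^c) > 0$. Taking $\eta = \rho$ in the previous step yields a $\rho$-chain from $x \in U$ to $y \in U^c$; but any chain passing from $U$ to $U^c$ has a consecutive pair at distance $\ge \rho$, contradicting the defining inequality of a $\rho$-chain. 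This bridge between the metric notion of arbitrarily fine $\epsilon$-chains and the topological fact of total disconnectedness is the crux of the argument, and I expect it to be the main obstacle; the finiteness and refinement claims are routine. The contradiction shows the maximal diameters tend to zero.
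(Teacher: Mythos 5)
Your proposal is correct, and the first two claims are handled essentially as in the paper (for finiteness the paper is even quicker: a partition of a compact space by open sets is an open cover with no proper subcover, hence finite; your totally-bounded-cover argument is equally valid). The genuine difference is in the third claim. The paper argues directly: given $\sigma > 0$, apply \propref{manycor} to get a finite clopen partition $\{E(n)\}$ with all diameters less than $\sigma$, let $r_n > 0$ be the (compactness-guaranteed) separation of $E(n)$ from $E(n)^c$, and set $\epsilon = \min(\min_n r_n, \sigma)$; then no $\epsilon$-chain can cross from any $E(n)$ into its complement, so every $\epsilon$-component sits inside a single $E(n)$ and has diameter at most $\sigma$, with the bound persisting for smaller $\epsilon$ by the refinement property. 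You instead argue by contradiction: extract limit points $x \neq y$ joined by $\eta$-chains for every $\eta > 0$, then block those chains with a single separating clopen set from \propref{manycor}. Both proofs rest on exactly the same mechanism---a chain whose mesh is below the separation of a clopen set from its complement cannot cross it---but the paper deploys it uniformly over the whole partition to get a direct, quantitative statement (an explicit $\epsilon$ for each $\sigma$), whereas you deploy it once, at the cost of a sequential-compactness detour; a small compensating advantage of your version is that it treats an arbitrary sequence $\epsilon_n \to 0^+$ head-on and so does not need the refinement/monotonicity observation that the paper's direct argument implicitly uses to pass from ``some small $\epsilon$ works'' to ``all smaller $\epsilon$ work.''
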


\begin{proof}
As noted above, the $\epsilon $-components are clopen.  Since $\Omega $ is compact and
${\cal C}_{\epsilon}$ is a partition of $\Omega $ by open sets, the set ${\cal C}_{\epsilon}$ is finite.

If $\epsilon < \sigma $ and $E \in {\cal C}_{\epsilon}$, then $E$ is contained in the $\sigma $-component of its elements, 
so ${\cal C}_{\epsilon }$ is a refinement of ${\cal C}_{\sigma }$,
  
Finally, using \propref{manycor}, for any $\sigma > 0$ there is a finite partition ${\cal E} = \{ E(n), n=1,\dots, N \}$ of $\Omega $ by clopen sets such that 
the diameter of each $E(n) \in {\cal E}$ is less than $\sigma $.  Since $E(n),E(n)^c$ are compact, for $x \in E(n)$ and $y \in E(n)^c$,
$r_n = \min d(x,y) > 0 $.   Let $r = \min _n r_n$ and $\epsilon = \min (r,\sigma )$.
Each set in $E \in {\cal C}_{\epsilon }$ is contained in one $E(n)$, so ${\rm diam}(E) \le \sigma $.
\end{proof}

A sequence of partitions ${\bf E} = \{ {\cal E} _n, n = 0,1,2,\dots \}$ is nested if each partition ${\cal E}_{n+1}$ 
is a refinement of ${\cal E}_{n}$.  By using $\epsilon $-components one may identify a canonical sequence of partitions of $\Omega $ by clopen sets. 
Since the number $\eta (\epsilon )$ of $\epsilon $-components is increasing as $\epsilon \to 0^+$, there is a (possibly finite) decreasing sequence $\jmp$ 
listing the values of $\epsilon$ where the size of the partition increases, with each number appearing with suitable multiplicity.  

Define the separation of two disjoint compact sets $K_1,K_2$ in $\Omega $ to be
\[s(K_1,K_2) = min (x_1,x_2), \quad x_1 \in K_1, x_2 \in K_2.\]
Since the sets are compact and disjoint, the minimum is achieved and positive.

\begin{prop} If $\Omega $ is a nonempty, compact, totally disconnected metric space,
the number of $\epsilon $-components is continuous from the left,
\[\lim_{\epsilon \uparrow \epsilon _0} \eta (\epsilon ) = \eta (\epsilon _0 ).\]
\end{prop}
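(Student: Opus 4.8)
The plan is to exploit the fact, recorded in \propref{specialpart}, that $\eta$ is a non-increasing, integer-valued function of $\epsilon$: when $\epsilon < \sigma$ the partition ${\cal C}_{\epsilon}$ refines ${\cal C}_{\sigma}$, so $\eta(\epsilon) \ge \eta(\sigma)$. Consequently the left limit $L = \lim_{\epsilon \uparrow \epsilon_0} \eta(\epsilon)$ exists, equals $\inf_{\epsilon < \epsilon_0} \eta(\epsilon)$, and satisfies $L \ge \eta(\epsilon_0)$. Because the values are integers and $\eta$ is non-increasing, this infimum is attained: there is an $\epsilon_1 < \epsilon_0$ with $\eta(\epsilon) = L$ for every $\epsilon \in [\epsilon_1, \epsilon_0)$, and the refinement property then forces the partition ${\cal C}_{\epsilon}$ itself to be constant on $[\epsilon_1, \epsilon_0)$. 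Writing this common partition as $\{ F_1, \dots, F_L \}$, the statement reduces to showing $L = \eta(\epsilon_0)$, i.e. that no $\epsilon_0$-component is split by the partition just below $\epsilon_0$.

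Suppose, for contradiction, that $L > \eta(\epsilon_0)$. Since $\{ F_1, \dots, F_L \}$ refines the $\epsilon_0$-components, some $\epsilon_0$-component $E$ contains at least two of the $F_k$. Fix one such $F_k \subset E$, set $A = F_k$ and $B = E \setminus A$; both are nonempty, clopen, compact, and disjoint. The heart of the argument is the separation estimate $s(A,B) \ge \epsilon_0$. For $x \in A$, $y \in B$, and any $\epsilon \in [\epsilon_1, \epsilon_0)$, the points $x$ and $y$ lie in distinct sets of the constant partition $\{ F_1, \dots, F_L \}$, hence in distinct $\epsilon$-components; were $d(x,y) < \epsilon$, the pair $x,y$ would be a two-point $\epsilon$-chain and lie in the same $\epsilon$-component. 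Thus $d(x,y) \ge \epsilon$ for every $\epsilon < \epsilon_0$, and letting $\epsilon \uparrow \epsilon_0$ gives $d(x,y) \ge \epsilon_0$; taking the minimum over $x,y$ yields $s(A,B) \ge \epsilon_0$.

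To finish I would derive a contradiction from the fact that $E = A \cup B$ is a single $\epsilon_0$-component. Choosing $x \in A$ and $y \in B$, there is an $\epsilon_0$-chain $x = z_1, \dots, z_K = y$ with $d(z_j, z_{j+1}) < \epsilon_0$; since each $z_j$ is connected to $x$ by an $\epsilon_0$-chain, the whole chain lies in $E = A \cup B$. As $z_1 \in A$ and $z_K \in B$, some consecutive pair satisfies $z_j \in A$ and $z_{j+1} \in B$, whence $d(z_j, z_{j+1}) \ge s(A,B) \ge \epsilon_0$, contradicting $d(z_j, z_{j+1}) < \epsilon_0$. This forces $L = \eta(\epsilon_0)$, which is the asserted left-continuity. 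I expect the main obstacle to be the reduction in the first paragraph, namely verifying that the partition itself, and not merely its cardinality, stabilizes on an interval $[\epsilon_1, \epsilon_0)$, together with the observation that it is precisely the strict inequality in the definition of an $\epsilon$-chain that makes the separation bound come out as $\ge \epsilon_0$ rather than $> \epsilon_0$; this strictness is exactly what produces left- rather than right-continuity.
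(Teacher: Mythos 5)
Your proposal is correct and takes essentially the same route as the paper's own proof: monotonicity and integer values stabilize the partition ${\cal C}_{\epsilon}$ on an interval $[\epsilon_1,\epsilon_0)$, the strict inequality in the definition of an $\epsilon$-chain then gives the separation bound $d(x,y)\ge\epsilon_0$ between points in distinct parts, and that bound rules out any merging of components at $\epsilon=\epsilon_0$. The only difference is presentational: you spell out two steps the paper leaves implicit (that the partition itself, not just its cardinality, stabilizes, and the chain-crossing argument showing separation $\ge \epsilon_0$ prevents merging), which strengthens rather than changes the argument.
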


\begin{proof}

Suppose 
\[ \eta _1 =  \inf _{\epsilon < \epsilon _0} \eta (\epsilon ).\]
Recall that the $\epsilon $-components grow, in the sense of inclusion, as $\epsilon $ increases.
Thus there is some $\epsilon _1 < \epsilon _0$ such that the set  
$\{E(n), n = 1,\dots ,\eta _1 \}$ of $\epsilon $-components does not change if $\epsilon _1 < \epsilon < \epsilon _0 $.
This implies that for all $E(n)$, the separation of $E(n)$ from $E(n)^c$ is at least $\epsilon _0$.
But then the number of $\epsilon$-components does not decrease if $\epsilon = \epsilon _0$.
\end{proof}

As the canonical sequence, take the partitions ${\cal E}_0 = \Omega$, and for $n = 1,2,3,\dots $ let ${\cal E}_n = {\cal C}_{\jmp}$.

For a partition ${\cal E} = \{ E(m), m = 1, \dots , M \}$ of $\Omega $ by clopen sets, 
let ${\vecsp}_{\cal E}$ denote the vector space spanned by the characteristic functions of the sets $E(m) \in {\cal E}$.
 If $\bf E$ is a nested sequence of partitions, then ${\vecsp}_{{\cal E}_n} \subset  {\vecsp}_{{\cal E}_{n+1}}$;
let ${\vecsp}_{\bf E}$ denote the vector space ${\vecsp}_{\bf E} = \bigcup_{n=0}^{\infty} {\vecsp}_{{\cal E}_n}$.

If $\bf E$ is a nested sequence of finite partitions by clopen sets, 
let $B$ denote the collection of sets consisting of finite unions of sets in any of the partitions ${\cal E}_n$.  If $S \in B$ is a finite union of sets from the partitions, 
then $S$ can be written as the finite union of sets from a single sufficiently fine partition ${\cal E}_N$.
Since ${\cal E}_N$ is a finite partition by clopen sets, the complement of $S$ is also in $B$.
Since $B$ is closed under finite unions and complements, $B$ an algebra of sets \cite[p. 20]{Folland}.  
Borrowing a term from calculus, let the mesh of the partition $m_n$ be 
the maximum of the diameters of the sets $E(m,n) \in {\cal E}_n$.

\begin{thm} \label{Borel}
Assume $\Omega $ is a nonempty, compact, and totally disconnected metric space with a nested sequence  
$\bf E$ of finite partitions by clopen sets.
If $\lim _{n \to \infty} m_n = 0$, then 
(i) the uniform closure of ${\vecsp}_{\bf E}$ is the set of continuous functions on $\Omega $, and 
(ii) the $\sigma$-algebra $\cal B$ generated by $B$ is the Borel sets of $\Omega $.
\end{thm}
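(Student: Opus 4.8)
The plan is to treat the two statements separately, exploiting in each case that the hypothesis $\lim_{n\to\infty} m_n = 0$ forces the partitions to become arbitrarily fine while their constituent sets remain clopen.

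For (i) I would first dispatch the easy inclusion. Every function in ${\vecsp}_{{\cal E}_n}$ is a finite linear combination of characteristic functions of clopen sets, hence is locally constant and therefore continuous; thus ${\vecsp}_{\bf E} \subset C(\Omega)$, and since a uniform limit of continuous functions is continuous, the uniform closure of ${\vecsp}_{\bf E}$ is contained in $C(\Omega)$. For the reverse inclusion, fix $f \in C(\Omega)$ and $\epsilon > 0$. As $\Omega $ is compact, $f$ is uniformly continuous, so there is a $\delta > 0$ with $|f(x) - f(y)| < \epsilon$ whenever $d(x,y) < \delta$. Using $m_n \to 0$, choose $n$ with $m_n < \delta$; then $f$ oscillates by less than $\epsilon$ on each set $E(m,n) \in {\cal E}_n$. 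Selecting any point $x_m \in E(m,n)$ and setting $g = \sum_m f(x_m)\chi_{E(m,n)}$ produces $g \in {\vecsp}_{{\cal E}_n} \subset {\vecsp}_{\bf E}$ with $\| f - g \|_{\infty} \le \epsilon$, so $f$ lies in the uniform closure. This proves (i).

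For (ii) I would again open with the routine inclusion: each set in the algebra $B$ is a finite union of clopen sets, hence Borel, so ${\cal B} = \sigma(B)$ is contained in the Borel $\sigma $-algebra. For the reverse inclusion it suffices, $\Omega $ being a metric space, to show that every open set $U \subseteq \Omega $ lies in ${\cal B}$. The key observation is that $U$ is a countable union of partition sets. Indeed, for each $x \in U$ pick $r > 0$ so that the open ball of radius $r$ about $x$ is contained in $U$, then (using $m_n \to 0$) choose $n$ with $m_n < r$; the set $E(m,n) \in {\cal E}_n$ containing $x$ has diameter less than $r$ and hence lies in $U$. Thus $U$ equals the union of those sets $E(m,n)$ that are contained in $U$. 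Since there are only countably many sets $E(m,n)$ in all (finitely many in each of countably many partitions) and each belongs to $B$, this writes $U$ as a countable union of members of $B$, so $U \in {\cal B}$. Hence the Borel $\sigma $-algebra is contained in ${\cal B}$, and the two coincide.

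Neither direction presents a genuine obstacle; both reduce to the single mechanism that the fineness of the partitions, $m_n \to 0$, lets arbitrarily small clopen sets isolate points, with compactness supplying uniform continuity in (i). The only points demanding care are verifying that the approximant $g$ in (i) really belongs to ${\vecsp}_{{\cal E}_n}$, and that the union in (ii) is genuinely countable — the latter being exactly where the finiteness of each partition and the countability of the sequence $\bf E$ are used.
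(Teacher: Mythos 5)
Your proof is correct, but it takes a genuinely different route from the paper's in both parts. For (i), the paper invokes the Stone--Weierstrass theorem: ${\vecsp}_{\bf E}$ is an algebra of continuous functions which, because $m_n \to 0$, separates points and vanishes nowhere, so its uniform closure is all of $C(\Omega)$. You replace this with a direct construction --- uniform continuity from compactness, then sampling $f$ on a partition of mesh less than $\delta$ --- which is more elementary, produces an explicit approximant, and shows the error is controlled by the modulus of continuity of $f$ at scale $m_n$; the cost is negligible, and your version avoids quoting a big theorem where a two-line estimate suffices. For (ii), the paper works with closed sets: a closed set $K$ is written as $\bigcap_{N} U_N$, where $U_N \in B$ is the finite union of those sets in a partition of mesh less than $1/N$ that contain a point of $K$, so that closed sets lie in $\cal B$. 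You instead write each open set $U$ as a countable union of partition sets contained in $U$, in effect observing that the countably many partition sets form a countable basis for the topology of $\Omega$. The two arguments are dual (unions of sets inside $U$ versus intersections of covers of $K$), and both are valid; yours has the minor structural advantage that part (ii) never uses compactness, only $m_n \to 0$ and the finiteness of each partition, whereas the paper's phrasing leans on compactness of $K$ (though it really only needs that points outside a closed set are at positive distance from it).
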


\begin{proof}
Since the sets in the partitions are clopen, the functions in ${\vecsp}_{\bf E}$ are continuous.
Moreover, ${\vecsp}_{\bf E}$ is an algebra with operations of pointwise addition and multiplication.
Since  $\lim _{n \to \infty} m_n = 0$, the algebra separates points and does not vanish anywhere.
Thus (i) holds by the Stone-Weierstrass Theorem. 

Each set in $B$ is open, so $\cal B$ is a subset of the Borel sets. 
Suppose $K$ is a closed set in $\Omega $.  For a positive integer $N$, chose a partition ${\cal E}_n$  with $m_n < 1/N$.
Cover $K $ with those sets from ${\cal E}_n$ which contain at least one point of $K$.  This cover $U_N$ is a set in $B$.   
Since $K$ is compact, the distance from a point $x$ to $K$ is positive for each point $x$ in the complement of $K $. 
Thus $K = \cap_{N=1}^{\infty} U_N$, and the $\sigma$-algebra $\cal B$
contains all the closed subsets of $\Omega $, establishing (ii).   
\end{proof}

One would like to use ${\vecsp}_{\bf E}$ to construct a Hilbert space of functions on $\Omega $.
If $\Omega $ comes equipped with a measure $\mu $, the space $L^2(\mu )$ is an obvious candidate. 
The next result shows that a suitable measure $\rho $ may be constructed from a nested sequence of partitions
such as the canonical sequence from \propref{specialpart}.  

Letting ${\cal E}_0 = \Omega$, define $\rho (\emptyset ) = 0$ and $\rho ({\cal E}_0 ) = 1$.
Proceeding inductively, suppose $E \in {\cal E}_n$, and $\{ E(m), m=1,\dots , M \}$ is the partition of $E$ by sets $E(m) \in {\cal E}_{n+1}$.
Equally distribute the measure of $E$ to the sets $E_m$ by defining $\rho (E(m)) = \rho (E)/M$.
If $S$ is in the algebra $B$ of finite unions of sets in the partitions ${\cal E}_n$, then as above $S$ 
is the union of finitely many sets $S_k$ from a single partition ${\cal E}_N$.  
The definition $\rho (S) = \sum_k \rho (S_k)$ will not depend on the selected partition ${\cal E}_N$.

\begin{thm} \label{rhoex}
Assume that $\Omega $ is a nonempty, compact and totally disconnected metric space, 
with a nested sequence ${\bf E} = \{ {\cal E}_n \}$ of finite partitions by clopen sets.  Suppose $\lim _{n \to \infty} m_n = 0$.
The function $\rho :B \to \real $ extends to a Radon measure $\rho $ on $\Omega$, with
$\rho (I) > 0$ for every nonempty clopen set $I \subset \Omega $. 
The vector space ${\cal V}_{\bf E}$ is dense in $L^2(\rho)$. 
\end{thm}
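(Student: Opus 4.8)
The plan is to treat the three assertions in turn: that $\rho$ extends to a Radon measure, that it is strictly positive on every nonempty clopen set, and that ${\cal V}_{\bf E}$ is dense in $L^2(\rho)$.

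First I would establish that $\rho$ is a premeasure on the algebra $B$, so that the Carath\'eodory extension theorem applies. Well-definedness and finite additivity of $\rho$ on $B$ are already in hand from the discussion preceding the statement, so the only substantive point is countable additivity. Suppose $S \in B$ is the disjoint union of countably many sets $S_k \in B$. Here I would use that every element of $B$ is clopen, hence open and, since $\Omega$ is compact, also compact. The sets $S_k$ then form an open cover of the compact set $S$, so finitely many already cover $S$; because the $S_k$ are pairwise disjoint and exhaust $S$, all but finitely many must be empty. Thus $\sum_k \rho(S_k)$ is in fact a finite sum and countable additivity collapses to the finite additivity already available. This is the main obstacle, and its resolution rests entirely on the clopen-hence-compact structure of the sets in $B$; this is exactly where total disconnectedness is used. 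With $\rho$ a premeasure, Carath\'eodory produces an extension to $\sigma(B)$, which by \thmref{Borel}(ii) is the Borel $\sigma$-algebra of $\Omega$. Since $\rho(\Omega) = 1 < \infty$ and $\Omega$ is a compact metric space, the resulting finite Borel measure is automatically regular, hence Radon.

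For strict positivity, I would take a nonempty clopen set $I$ and a point $x \in I$; since $I$ is open it contains a ball $B_{\delta}(x)$ for some $\delta > 0$. Using $\lim_{n\to\infty} m_n = 0$, I choose $n$ with mesh $m_n < \delta$, so the partition set $E \in {\cal E}_n$ containing $x$ has diameter less than $\delta$ and therefore lies inside $I$. A short induction on the defining recursion $\rho(E(m)) = \rho(E)/M$, starting from $\rho(\Omega) = 1$, shows that every set in every partition ${\cal E}_n$ has strictly positive measure. By monotonicity of $\rho$ it follows that $\rho(I) \ge \rho(E) > 0$.

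Finally, for density I would invoke \thmref{Borel}(i): the uniform closure of ${\cal V}_{\bf E}$ is $C(\Omega)$. Because $\rho$ is finite, the inequality $\|f-g\|_{L^2(\rho)}^2 \le \rho(\Omega)\,\|f-g\|_\infty^2$ shows that uniform convergence implies $L^2(\rho)$ convergence, so the $L^2(\rho)$-closure of ${\cal V}_{\bf E}$ contains all of $C(\Omega)$. Since continuous functions are dense in $L^2$ of a finite Radon measure on a compact metric space, $C(\Omega)$ is dense in $L^2(\rho)$, and therefore ${\cal V}_{\bf E}$ is dense in $L^2(\rho)$ as claimed.
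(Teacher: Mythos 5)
Your proof is correct and follows essentially the same route as the paper's: countable additivity on $B$ collapses to finite additivity because disjoint clopen sets covering a compact set are finitely many, the extension and identification of $\sigma(B)$ with the Borel sets go through \thmref{Borel}, positivity comes from a small partition set inside the clopen set, and density combines uniform approximation with the density of $C(\Omega)$ in $L^2(\rho)$. The only differences are cosmetic — you spell out the induction for positivity and the mesh argument that the paper leaves implicit.
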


\begin{proof}

The approach follows \cite{Folland}.

Suppose $\{ S_j \}$ is a sequence of disjoint sets from $B$ such that $S = \bigcup_{j=1}^{\infty} S_j \in B$.   
Every $S \in B$ is a clopen subset of $\Omega $.  Since the sets $S_j$ are disjoint and $S$ is compact, 
the cover by $\{ S_j \}$ is a finite cover, and $\rho (\bigcup_j S_j) = \sum_j \rho(S_j)$.  
Thus $\rho$ is a premeasure on $B$.  By \thmref{Borel} the sigma algebra ${\cal B}$ generated by $B$ is the Borel sets of $\Omega$.
Thus \cite[p. 30]{Folland} $\rho $ has a unique extension to a Borel measure (also called $\rho $) on $\Omega $.       

Suppose $I \subset \Omega $ is clopen, with $x \in I$.  Since $I$ is open,  there is a clopen set $U_x \in B$ 
containing $x$ and contained in $I$, so that $0 < \rho (U_x) \le \rho (I)$.  

By \cite[p. 210]{Folland} the measure $\rho $ is a Radon measure, and the continuous functions on $\Omega$ are dense in $L^2(\rho)$. 
By \thmref{Borel} any continuous function on $\Omega $ may be uniformly approximated by functions in ${\cal V}_{\bf E}$,  
so ${\cal V}_{\bf E}$ is dense in $L^2(\rho)$.

\end{proof}

Suppose $\Omega $ and ${\bf E}$ are as in \thmref{rhoex}.
Consider any positive Borel measure $\mu$ on $\Omega $, with $\mu (\Omega ) < \infty$ and $\mu (E) > 0$ for each $E \in {\cal E}_n$. 
The $L^2(\mu ) $ inner product $\langle f,g \rangle _{\mu}  = \int fg \ d \mu $
can be used to construct a generalized Haar basis on ${\cal V}_{\bf E} = \bigcup_{n=0}^{\infty} {\cal V}_n$.
A more general treatment, examining function theory in some depth, can be found in \cite{Girardi}.

The construction of the orthonormal basis $ \{ \chi _{m,n} \} $ for ${\cal V}_{\bf E}$ is inductive,
starting with $\chi _{0,0} = 1_{\Omega }\mu (\Omega )^{-1/2}$.
Suppose $E \in {\cal E}_n$, and $\{ E(m), m=1,\dots , M \}$ is the partition of $E$ by sets $E(m) \in {\cal E}_{n+1}$.
The $L^2(\mu )$ inner product on the span of the functions $1_{E(m)}$ gives
\[\langle 1_{E(j)} ,1_{E(k)} \rangle = \Bigl \{ \begin{matrix} \mu (E(k)), & j = k, \cr 0,& j \not= k \end{matrix} \Bigr \} .\]
Apply the Gram-Schmidt process to $1_{E}, 1_{E(1)},\dots ,1_{E(K-1)}$ 
to obtain $K$ orthonormal functions, with the first function in ${\cal V}_n$ and the remaining $K-1$ functions
in ${\cal V}_{n+1}$ and orthogonal to ${\cal V}_n$.

Let $\chi _{m,n}$ denote the constructed orthonormal basis of ${\cal V}_{\bf E}$.
The linear span of $\chi _{m,k}$ for $k \le n$ includes the characteristic function of each $E \in {\cal E}_{n}$,
so the span is ${\cal V}_{\bf E}$.  In addition $\lim _{n \to \infty} m_n = 0$, so by \propref{Borel} the linear span of the $\chi _{m,n}$ 
is uniformly dense in the continuous functions, giving the next result.

\begin{prop} \label{Hilb}
Assume that $\Omega $ is a nonempty, compact and totally disconnected metric space, 
with a nested sequence ${\bf E} = \{ {\cal E}_n \}$ of finite partitions by clopen sets.  Suppose $\lim _{n \to \infty} m_n = 0$.
Assume $\mu $ is a positive finite Borel measure on $\Omega $ with $\mu (E) > 0$ for each $E \in {\cal E}_n$.
Then the basis $ \{ \chi _{m,n} \} $ is a complete orthonormal basis for $L^2(\mu)$.
\end{prop}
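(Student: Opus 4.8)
The plan is to verify two things: that $\{ \chi_{m,n} \}$ is an orthonormal set, and that its closed linear span is all of $L^2(\mu)$. Orthonormality holds by the construction. Each Gram--Schmidt step operates on the indicator functions $1_E, 1_{E(1)}, \dots , 1_{E(K-1)}$ attached to a single $E \in {\cal E}_n$ and its children in ${\cal E}_{n+1}$; within one such block the functions produced are orthonormal by definition of Gram--Schmidt, while functions coming from distinct blocks at the same level have disjoint supports and so are automatically orthogonal. Functions at different levels are orthogonal because those introduced in passing from ${\cal E}_n$ to ${\cal E}_{n+1}$ are constructed to be orthogonal to ${\cal V}_n$, which already contains every function built at earlier levels. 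So I would first record that $\{ \chi_{m,n} \}$ is orthonormal.

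For completeness the key point is that the span of $\{ \chi_{m,n} \}$ equals ${\cal V}_{\bf E}$, which is established in the paragraph preceding the statement. It therefore suffices to show that ${\cal V}_{\bf E}$ is dense in $L^2(\mu)$, and I would argue this in two steps. First, since $\lim_{n \to \infty} m_n = 0$, \thmref{Borel} shows that ${\cal V}_{\bf E}$ is uniformly dense in $C(\Omega)$. Because $\mu$ is finite, $\| f \|_{L^2(\mu)} \le \| f \|_{\infty} \, \mu(\Omega)^{1/2}$, so uniform convergence implies convergence in $L^2(\mu)$; hence ${\cal V}_{\bf E}$ is $L^2(\mu)$-dense in $C(\Omega)$. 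Second, $C(\Omega)$ is itself dense in $L^2(\mu)$.

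The one substantive ingredient is the density of $C(\Omega)$ in $L^2(\mu)$. This is not automatic for an arbitrary Borel measure, but $\Omega$ is a compact metric space, so every finite Borel measure on $\Omega$ is regular, that is a Radon measure, and for Radon measures the continuous functions are dense in $L^2$---the same fact invoked for the constructed measure $\rho$ in \thmref{rhoex} via \cite[p. 210]{Folland}. Combining the two density steps gives ${\cal V}_{\bf E}$ dense in $L^2(\mu)$, and an orthonormal set whose linear span is dense is a complete orthonormal basis. I expect the regularity step to be the only place requiring any care, since everything else is either the explicit Gram--Schmidt construction or a direct application of \thmref{Borel}.
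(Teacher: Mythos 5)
Your proof is correct and follows essentially the same route as the paper: orthonormality from the Gram--Schmidt construction, identification of the span of $\{ \chi_{m,n} \}$ with ${\cal V}_{\bf E}$, uniform density in $C(\Omega)$ via \thmref{Borel}, and density of $C(\Omega)$ in $L^2(\mu)$. Your explicit justification of the last step---that a finite Borel measure on a compact metric space is automatically regular, hence Radon, so continuous functions are dense in $L^2(\mu)$---addresses a point the paper leaves implicit (it cites Folland for this only for the constructed measure $\rho$ in \thmref{rhoex}), and it is the right way to fill that gap.
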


\section{Dirichlet to Neumann map}

Given a Hilbert space basis such as $\{ \chi _{m,n} \}$, it is natural to look for distinguished operators $\dop$ on $L^2(\mu)$.
The values $\jmp$ where the number of $\epsilon$-components changes provides a decreasing positive sequence with limit zero
and finite multiplicity.  By a rough analogy with differential operators, once might consider $\lambda _0 = 0, \lambda _k = 1/\jmp$ as eigenvalues for $\dop$,   
with eigenfunctions $\{ \chi _{m,n} \}$.  This obviously encodes geometric data in the eigenvalue sequence.  
To reflect more of the structure of $\gbar$, the Dirichlet to Neumann map for harmonic functions on $\gbar$ will be used below to identify 
additional boundary measures, which can play a role similar to that of $\rho $ in \thmref{rhoex} and \propref{Hilb}.

A continuous function $f:\gbar \to \real $ is harmonic on $\graph _{int}$
if (i) $f'' = 0$ on each edge $e \in \edgeset $, and (ii) at each vertex $v \in \graph _{int}$ the derivative condition
\begin{equation} \label{Kirchhoff}
\sum_{e \sim v} \partial _{\nu} f_e(v) = 0,
\end{equation}
holds, where the sum is taken over edges incident on $v$, denoted $e \sim v$.  
Recall that each edge $e$ is identified with an interval $[a_e,b_e]$.  Here, $\partial _{\nu} f_e$ denotes the derivative of $f$ along the edge $e$ 
when $v$ is identified with $b_e$. 
(This choice is made for consistency with \cite{Carlson12}. The opposite sign is chosen in \cite[p. 88]{BK}.)

Since $f$ is continuous at $v$ and linear on edges $e$ incident on $v$, 
\[f_e(a_e) = f(v) - \partial _{\nu}f_e(v)(b_e-a_e),\] 
or \[\frac{f_e}{l_e} = \frac{f(v)}{l_e} - \partial _{\nu} f_e(v).\]
Using $C_e= (b_e - a_e)^{-1} = l_e^{-1}$ and adding gives
\begin{equation} \label{diffharm}
f(v) = (\sum _{e \sim v} C_e)^{-1} \sum _{e \sim v} C_e f_e(a_e) .
\end{equation}
Thus the definition of harmonic functions here is consistent with the definition commonly used for resistor networks and reversible Markov chains
\cite[p. 46]{Doyle}.   Since $f(v)$ is a weighted average of the neighboring values $f(a_e)$ for each $v \in \graph _{int}$,
$f$ satisfies the maximum principle: 
that is, the minimum or maximum of $f$ is achieved on $\partial \gbar$, and if $\graph _{int}$ is connected then
$f$ can only have a minimum or maximum at a point $x \in \graph _{int}$ when $f$ is constant.

The subsequent developments build on \cite{Carlson12}, where somewhat more general problems were treated.
In the context of this work, \cite[Thm 2.7]{Carlson12} has the following conclusion, showing that the Dirichlet problem is solvable.
In that work $\partial \gbar \cap \graph $ was the set of vertices of degree one.

\begin{thm} \label{DX}
If $\gbar$ is weakly connected and compact, then every continuous function $F:\partial \gbar \to \real $ 
has a unique extension to a continuous function $f: \gbar \to \real $ that is harmonic on $\graph _{int}$.
\end{thm}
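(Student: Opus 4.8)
The plan is to prove uniqueness directly from the maximum principle recorded after equation~\eqref{diffharm}, and to prove existence by exhausting $\graph$ with the finite subgraphs $\graph _{\epsilon}$, solving the classical finite Dirichlet problem on each, and passing to a limit. I expect uniqueness to be routine; the genuine obstacle will be establishing continuity of the limit up to the boundary $\partial \gbar$.

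For uniqueness, suppose $f_1,f_2$ are both continuous on $\gbar $, harmonic on $\graph _{int}$, and equal to $F$ on $\partial \gbar $. The difference $g = f_1 - f_2$ is continuous on the compact space $\gbar $, harmonic on $\graph _{int}$, and vanishes on $\partial \gbar $. The maximum principle forces both the maximum and the minimum of $g$ to be attained on $\partial \gbar $, where $g = 0$, so $g \equiv 0$. For existence I would first extend $F$ to a continuous function $\widehat F$ on all of $\gbar $ by Tietze's theorem, using that $\partial \gbar $ is closed in the compact, hence normal, space $\gbar $. For each $\epsilon > 0$ I solve the Dirichlet problem on the finite subgraph $\graph _{\epsilon}$: assign the values of $\widehat F$ at the vertices of the relative boundary of $\graph _{\epsilon}$ and impose \eqref{Kirchhoff} at the remaining vertices. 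By \eqref{diffharm} each interior value is a weighted average of its neighbors; the homogeneous version of this square linear system admits only the trivial solution by the finite maximum principle, so it has a unique solution $f_{\epsilon}$, and the finite maximum principle also gives the uniform bound $\|f_{\epsilon}\|_{\infty} \le \|\widehat F\|_{\infty}$.

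Next I extract an interior limit. For each vertex $v \in \graph _{int}$ one has $d(v,\partial \gbar ) > 0$, since $\partial \gbar $ is closed and $v \notin \partial \gbar $; hence once $\epsilon < d(v,\partial \gbar )$ the vertex $v$ and all its incident edges lie in $\graph _{\epsilon}$, and $v$ is a genuine interior vertex of $\graph _{\epsilon}$ rather than a relative boundary vertex. Since $\vertexset$ is countable and the numbers $f_{\epsilon}(v)$ are uniformly bounded, a diagonal argument yields a sequence $\epsilon _k \to 0$ along which $f_{\epsilon _k}(v)$ converges for every vertex $v$. Because a harmonic function is affine on each edge, these vertex limits determine a limit function $f$ on $\graph $ that is affine on every edge, so $f'' = 0$ automatically; and \eqref{Kirchhoff} at any fixed interior vertex $v$ is a single finite linear relation holding for all small $\epsilon _k$, so it persists in the limit. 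Thus $f$ is harmonic on $\graph _{int}$.

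The remaining and hardest step is to show that $f$ extends continuously to $\gbar $ with boundary values $F$. Fix $\xi \in \partial \gbar $ and $\delta > 0$. Using the nested clopen partitions of \propref{specialpart} I would choose a clopen set $E \ni \xi $ of diameter so small that $|F - F(\xi )| < \delta $ on $E$, and invoke \propref{partition} to guarantee that, for $\epsilon $ small, the part of $\graph _{\epsilon}$ lying in $N_{\epsilon}(E)$ meets the rest of $\graph _{\epsilon}$ only through finitely many relative boundary vertices, all contained in $N_{\epsilon}(E)$. Applying the maximum principle to $f_{\epsilon}$ on this region expresses $f_{\epsilon}(x)$, for $x$ near $\xi $, as a convex combination of boundary data on $E \cap \partial \gbar $ (which lie within $\delta $ of $F(\xi )$) and of the values of $f_{\epsilon}$ on the finite internal cut. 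The crux is to show that the weight carried by the cut tends to $0$ as $x \to \xi $. I would establish this by constructing a local barrier at $\xi $ from the decreasing sequence of clopen neighborhoods of \propref{specialpart}: a nonnegative superharmonic comparison function vanishing at $\xi $, bounded below away from $\xi $, and dominating the cut contribution. Comparison then gives $\limsup_{x \to \xi } |f(x) - F(\xi )| \le \delta $, and letting $\delta \to 0$ yields continuity at $\xi $. An alternative route to this boundary regularity, suggested by Kakutani's theorem, is to represent $f$ through the exit distribution of the reversible Markov chain attached to the conductances $C_e = l_e^{-1}$ and deduce continuity from the exit measure; I would regard the barrier construction as the main technical hurdle in either approach.
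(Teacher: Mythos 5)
Your uniqueness argument is correct, and the interior half of your existence scheme is sound: exhaustion by the finite graphs $\graph _{\epsilon}$ (whose finiteness is where compactness first enters), solvability of the finite Dirichlet problems with the uniform bound $\|f_{\epsilon}\|_{\infty} \le \|\widehat F\|_{\infty}$, diagonal extraction of vertex values, and persistence of \eqref{Kirchhoff} under pointwise convergence all work. The genuine gap is exactly the step you defer: you never construct the barrier, and without it the argument only produces \emph{some} bounded harmonic function on $\graph _{int}$, with no link between its boundary behavior and $F$. The assertion that the harmonic measure of the internal cut, seen from $x$, tends to $0$ as $x \to \xi $ uniformly in $\epsilon $ is not a technical loose end --- it is the entire analytic content of the theorem. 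Your proposed route to it is moreover close to circular: a nonnegative superharmonic function vanishing at $\xi $ and bounded below away from $\xi $ is itself essentially the solution of a Dirichlet-type problem on the infinite graph, i.e.\ the kind of object whose existence is in question; the clopen neighborhoods of \propref{specialpart} live on $\partial \gbar $ and do not by themselves produce such a function on $\graph $. Two further points of caution. First, \propref{partition} does not say what you use it for: it concerns paths from relative boundary vertices of $\graph _{\epsilon}$ to $\partial \gbar $ avoiding $\graph _{\epsilon}$; the internal cut separating the $E$-side from the rest must instead come from weak connectedness plus compactness, by the argument in the proof of \thmref{tdeqwc} applied to the disjoint compact sets $E$ and $E^c$. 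Second, any proof of the crux estimate must use compactness quantitatively, not merely for extracting limits: the appendix of this paper exhibits a weakly connected graph of finite diameter on which the Dirichlet problem is unsolvable, so soft maximum-principle and topological considerations of the kind you list cannot suffice on their own.

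For comparison with the source: the paper does not prove \thmref{DX} at all. It is stated as the specialization to this setting of \cite[Thm 2.7]{Carlson12}, and the boundary regularity you postpone is precisely the content of that cited theorem. So there is no argument in the present paper that your barrier step could lean on; to make your proposal into a proof you would have to supply, in effect, the proof of the external result.
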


In \cite{Carlson08,Carlson12} a central role is played by an algebra $\alg$ of 'eventually flat' functions.
These are functions $\phi: \gbar \to \real $ which are continuous, infinitely differentiable on the open edges of $\graph $,  
with $\phi ' = 0$ in the complement of a finite collection of edges, and 
in an open neighborhood of each vertex $v \in \graph$.  The following fact about $\alg$ is from \cite[ Lem. 3.5]{Carlson08}.

\begin{prop} \label{Aprop}
Suppose $\gbar$ is weakly connected.  If $\Omega $ and $\Omega _1$ are disjoint compact subsets of $\gbar $, then there is a 
function $\phi \in \alg $ such that $0 \le \phi \le 1$,
\[\phi (x) = 1, x \in \Omega ,\quad \phi (x) = 0, x \in \Omega _1.\]
\end{prop}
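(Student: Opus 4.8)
The plan is to reduce the construction to a finite metric subgraph by using the totally disconnected boundary to localize where $\phi$ must change. The essential point is that membership in $\alg$ forces $\phi$ to be constant near every vertex and on all but finitely many edges, so the entire transition from $1$ to $0$ must be confined to a finite subgraph; the clopen structure of $\partial \gbar$ is exactly what makes this confinement possible.

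First I would separate the boundary parts. By \thmref{tdeqwc} the boundary $\partial \gbar$ is totally disconnected and compact, and the sets $\Omega \cap \partial \gbar$ and $\Omega_1 \cap \partial \gbar$ are disjoint compact subsets. Covering $\Omega \cap \partial \gbar$ by sets from a partition as in \propref{manycor} of diameter smaller than the distance to $\Omega_1 \cap \partial \gbar$, and taking their union, produces a clopen set $U \subseteq \partial \gbar$ with $\Omega \cap \partial \gbar \subseteq U$ and $\Omega_1 \cap \partial \gbar \subseteq U' := \partial \gbar \setminus U$. Because $U \subseteq \partial \gbar$ is disjoint from $\graph_{int}$ and from $\Omega_1 \cap \partial \gbar$, while $U'$ is likewise disjoint from $\Omega$, the compact sets $U$ and $\Omega_1$ are disjoint and so are $U'$ and $\Omega$. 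I would then choose $\epsilon > 0$ small enough that the neighborhoods $N_{\epsilon}(U)$ and $N_{\epsilon}(U')$ are disjoint, that $N_{\epsilon}(U) \cap \Omega_1 = \emptyset$, and that $N_{\epsilon}(U') \cap \Omega = \emptyset$; disjointness of the first pair is available since $U,U'$ are disjoint compacta.

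Next I would isolate the finite core. Since $U \cup U' = \partial \gbar$, the complement $M = \gbar \setminus [N_{\epsilon}(U) \cup N_{\epsilon}(U')]$ contains no boundary point and every point of $M$ has distance at least $\epsilon$ from $\partial \gbar$; hence $M$ meets only finitely many edges, exactly as in the proof of \thmref{tdeqwc}. Let $\graph_S$ be the finite subgraph consisting of all closed edges meeting $M$ together with the (finitely many) closed edges meeting the interior parts $\Omega \cap \graph_{int}$ and $\Omega_1 \cap \graph_{int}$, and let $S$ be its point set. Any closed edge outside $\graph_S$ is disjoint from $M$, hence connected and contained in the disjoint open union $N_{\epsilon}(U) \cup N_{\epsilon}(U')$, so it lies entirely in one of these two sets. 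I would therefore set $\phi \equiv 1$ on every edge outside $\graph_S$ lying in $N_{\epsilon}(U)$ and $\phi \equiv 0$ on those lying in $N_{\epsilon}(U')$; this part is automatically flat.

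It remains to define $\phi$ on the finite graph $\graph_S$, where the work concentrates. The prescribed ``value $1$'' data consists of $\Omega \cap S$ together with the relative boundary vertices of $\graph_S$ whose exterior edge lies in $N_{\epsilon}(U)$, and the ``value $0$'' data consists of $\Omega_1 \cap S$ together with the relative boundary vertices whose exterior edge lies in $N_{\epsilon}(U')$. By the choices of $\epsilon$ these two data sets are disjoint compact subsets of the finite metric graph $\graph_S$, and the prescriptions agree with the flat exterior values at the relative boundary vertices. On a finite metric graph one can build a $[0,1]$-valued function, smooth on open edges, flat in a neighborhood of each vertex, equal to $1$ on the first data set and $0$ on the second, using standard mollified step profiles along edges; flatness at the vertices matches the exterior and guarantees $\phi \in \alg$. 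Finally I would verify continuity on $\gbar$: near any $b \in U$ a small ball avoids $S$ and lies in $N_{\epsilon}(U)$, so $\phi \equiv 1$ there, giving $\phi(b) = 1$, and symmetrically $\phi \equiv 0$ near $U'$; together with $\Omega \cap \graph_{int} \subseteq S$ where $\phi = 1$ and $\Omega_1 \cap \graph_{int} \subseteq S$ where $\phi = 0$, this yields $\phi = 1$ on $\Omega$ and $\phi = 0$ on $\Omega_1$. The main obstacle is precisely this reconciliation: the algebra $\alg$ permits variation only on finitely many edges and only away from vertices, yet $\Omega$ and $\Omega_1$ may have infinitely many edges accumulating at the boundary, and it is the clopen separation of the totally disconnected $\partial \gbar$ that confines all variation to the finite subgraph $\graph_S$ and renders the vertex data consistent.
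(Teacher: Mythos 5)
The paper offers no internal proof of this proposition: it is imported verbatim from \cite[Lem.\ 3.5]{Carlson08}, so there is no argument of the paper to compare yours against, and your construction must stand on its own. Its overall architecture is reasonable and close to what one would expect: separate the boundary traces $\Omega \cap \partial\gbar$ and $\Omega_1 \cap \partial\gbar$ by a clopen set $U$, choose $\epsilon$ so that $N_{\epsilon}(U)$ and $N_{\epsilon}(U')$ are disjoint and avoid $\Omega_1$ and $\Omega$ respectively, note that $M = \gbar \setminus [N_{\epsilon}(U) \cup N_{\epsilon}(U')]$ meets only finitely many edges (the same compactness argument as in \thmref{tdeqwc}), set $\phi$ constant on edges missing $M$, and interpolate smoothly with vertex flatness on what is left. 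Your consistency checks are also correct: a relative boundary vertex cannot have exterior edges in both neighborhoods, since the vertex itself would then lie in two disjoint sets. The reliance on compactness of $\gbar$ (through \thmref{tdeqwc}) is fine, as that is a standing assumption of the section even though it is absent from the statement.

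There is, however, one genuinely false step: the parenthetical claim that only \emph{finitely many} closed edges meet $\Omega \cap \graph_{int}$ and $\Omega_1 \cap \graph_{int}$. The sets $\Omega,\Omega_1$ are merely compact, and their interior parts can meet infinitely many edges; for instance, in a compact completion of an infinite binary tree, take $\Omega_1$ to be the root and $\Omega = \{x \in \gbar : d(x,\Omega_1) \ge 1/2\}$, which is compact and meets all but finitely many edges. With your definition, $\graph_S$ is then an infinite graph, so the appeal to a construction ``on a finite metric graph'' collapses, and membership in $\alg$ (which demands $\phi' = 0$ off finitely many edges and near every vertex, with only finitely many prescribed transitions achievable) is no longer justified. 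The repair is easy and shows those extra edges should never have been added: since $\Omega \cap N_{\epsilon}(U') = \emptyset$, one has $\Omega \subseteq M \cup N_{\epsilon}(U)$, so every point of $\Omega$ not lying on an edge that meets $M$ is either a boundary point in $U$ or lies on an edge wholly contained in $N_{\epsilon}(U)$, where you have already set $\phi \equiv 1$; symmetrically for $\Omega_1$. Thus $\graph_S$ should consist of the $M$-edges alone, which is finite, and the edgewise smooth Urysohn interpolation (equal to $1$ on a neighborhood of $\Omega \cap e$ and the forced-$1$ endpoints, $0$ on a neighborhood of $\Omega_1 \cap e$ and the forced-$0$ endpoints) then goes through. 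A smaller inaccuracy: your continuity check ``a small ball about $b \in U$ avoids $S$'' fails when $b$ is a degree-one vertex incident to an edge of $\graph_S$; continuity at such $b$ is instead supplied by the vertex flatness of the construction on $\graph_S$, so the conclusion survives.
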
   
 
A class of functions similar to $\alg$, and its relation to the end compactification of a graph, appeared previously in \cite{Cartwright},  
 
\subsection{Dirichlet to Neumann map for finite graphs}

Suppose $\graph _0$ is a finite metric graph without isolated vertices and at least two edges.
If $\graph _0$ is a subgraph of $\graph$, one may wish to include in $\partial \graph _0$ those vertices
with incident edges from both $\graph _0$ and its complement.  
With this in mind, let $\partial \graph _0$ to be any fixed set of vertices from $\graph _0$ which
includes the vertices of degree $1$.

Despite the more general definition of boundary vertices for $\graph _0$, \thmref{DX} can be used to establish
the existence and uniqueness of harmonic functions by a reduction to the previous case.  
Suppose $v \in \partial \graph _0$ with $\deg (v) = K$.
Decouple the edges $e_k$ incident on $v$ by replacing $v$ with new and distinct vertices $v_k$,
each now having degree $1$.  Doing this for each $v \in \partial \graph _0$ replaces $\graph _0$ with  
a new graph $\graph _1$ whose boundary vertices all have degree $1$.  If $U$ is a function on $\partial \graph _1$
with $U(v_1) = \dots = U(v_K)$ for each $v \in \partial \graph _0$, then $U$ has a harmonic extension $u$ to $\graph _1$.
By using the natural identifications, $u$ will also define a unique harmonic function on $\graph _0$ with the given values at  $v \in \partial \graph _0$. 

Simple calculus will lead to the definition and basic properties of the Dirichlet to Neumann map on finite graphs.
If $f$ and $g$ have continuous second derivatives on each edge, are continuous at the vertices, and satisfy \eqref{Kirchhoff} at interior vertices, then
integration by parts, together with the vertex conditions, gives
\begin{equation} \label{IBP1}
 \int_{\graph _0} -f'' g = - \sum_{v \in \partial \graph _0} \sum_{e \sim v} (\partial _{\nu} f_e(v)) g(v) + \int_{\graph _0} f'g' .
\end{equation}
Reversing the roles of $f$ and $g$ leads to 
\begin{equation} \label{IBP2}
\int_{\graph _0} -f'' g = \sum_{v \in \partial \graph _0} \sum_{e \sim v} [f(v)\partial _{\nu} g _{e}(v) - g(v)\partial _{\nu} f_e(v)] - \int_{\graph _0} f g''.
\end{equation}
If $f'' = 0$, \eqref{IBP1} becomes
\begin{equation} \label{Finform}
 \sum_{v \in \partial \graph _0} \sum_{e \sim v} (\partial _{\nu} f_e(v)) g(v) = \int_{\graph _0} f'g' ,
\end{equation}
while if $f'' = g'' = 0$, then \eqref{IBP2} reduces to 
\begin{equation} \label{symm}
\sum_{v \in \partial \graph _0} g(v) \sum_{e \sim v}  \partial _{\nu } f_e(v) = \sum_{v \in \partial \graph _0} f(v) \sum_{e \sim v}  \partial _{\nu }g_e(v) . 
\end{equation}
For $F:\partial \graph _0 \to \real $ with harmonic extension $f$, the basic Dirichlet to Neumann map $\Lambda $ is defined by 
\[ \Lambda _{\mu} F(v) = \sum_{e \sim v} \partial _{\nu } f_e(v) , \quad v \in \partial \graph _0.\]

Suppose now that $\mu$ is a measure on $\partial \graph _0$ with $\mu (v) > 0$ for all $v \in \partial \graph _0$.
The vector space $\vecsp$ of real valued functions on $\partial \graph _0$ has $\mu$ dependent
inner products and Dirichlet to Neumann maps.
If $\partial \graph _0 = \{ v_1,\dots ,v_N \}$ then an inner product  on $\partial \graph _0$ is
\[ \langle F,G \rangle _{\mu}  = \frac{1}{\mu ( \partial \graph _0) }\sum_{v \in \partial \graph _0} F(v) G(v) \mu (v).\]
If $f$ is the harmonic extension of $F$ to $\graph _0$, the $\mu $ dependent Dirichlet to Neumann map is 
\begin{equation} \label{genDN}
\Lambda _{\mu} F(v) = \mu (v)^{-1} \sum_{e \sim v} \partial _{\nu } f_e(v) , \quad v \in \partial \graph _0.
\end{equation}
With these definitions the Dirichlet to Neumann maps $\Lambda _{\mu}$ behave in the expected way.

\begin{prop} \label{DNfinite}
For a finite graph $\graph _0$ with boundary measure $\mu$ the Dirichlet to Neumann map $\Lambda _{\mu} $ is self-adjoint and nonnegative. 
\end{prop}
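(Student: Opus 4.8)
The plan is to reduce both assertions to the integration-by-parts identities \eqref{symm} and \eqref{Finform} already established, exploiting the fact that the weight $\mu(v)$ appearing in the inner product $\langle\cdot,\cdot\rangle_{\mu}$ is precisely cancelled by the factor $\mu(v)^{-1}$ built into the definition \eqref{genDN} of $\Lambda_{\mu}$. First I would note that $\Lambda_{\mu}$ is a well-defined linear operator on $\vecsp$: each $F:\partial\graph _0\to\real$ has a unique harmonic extension $f$ by the reduction to \thmref{DX} described above, and $\partial_{\nu}f_e(v)$ depends linearly on $F$. Writing out the definitions, for $F,G\in\vecsp$ with harmonic extensions $f,g$, the $\mu(v)^{-1}$ coming from $\Lambda_{\mu}F(v)$ cancels the $\mu(v)$ in the inner product, leaving
\[
\langle \Lambda_{\mu} F, G\rangle_{\mu} = \frac{1}{\mu(\partial\graph _0)}\sum_{v\in\partial\graph _0} G(v)\sum_{e\sim v}\partial_{\nu} f_e(v).
\]

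For self-adjointness I would apply the symmetry identity \eqref{symm}, which is valid here because $f''=g''=0$ on each edge. It asserts exactly that $\sum_{v} G(v)\sum_{e\sim v}\partial_{\nu} f_e(v) = \sum_{v} F(v)\sum_{e\sim v}\partial_{\nu} g_e(v)$, so the displayed quantity equals $\langle F,\Lambda_{\mu} G\rangle_{\mu}$, giving $\langle\Lambda_{\mu} F,G\rangle_{\mu} = \langle F,\Lambda_{\mu} G\rangle_{\mu}$.

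For nonnegativity I would take $G=F$ (hence $g=f$) and invoke \eqref{Finform}, which with $f''=0$ reads $\sum_{v} f(v)\sum_{e\sim v}\partial_{\nu} f_e(v) = \int_{\graph _0}(f')^2$. Since $f(v)=F(v)$ on $\partial\graph _0$, this yields
\[
\langle \Lambda_{\mu} F, F\rangle_{\mu} = \frac{1}{\mu(\partial\graph _0)}\int_{\graph _0}(f')^2 \ge 0.
\]

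I expect no serious obstacle: the identities \eqref{symm} and \eqref{Finform} do all the analytic work, and what is new here is only the bookkeeping with the measure $\mu$. The single point requiring care is confirming that the $\mu$-weights cancel correctly, so that both the symmetric and the nonnegative structure transfer from the unweighted boundary pairing $\sum_{v}\sum_{e\sim v}(\partial_{\nu} f_e(v))g(v)$ to the $\mu$-weighted inner product; once that cancellation is verified, both conclusions follow at once.
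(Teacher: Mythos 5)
Your proposal is correct and follows essentially the same route as the paper's own proof: self-adjointness via the symmetry identity \eqref{symm} after observing that the factor $\mu(v)^{-1}$ in \eqref{genDN} cancels the weight $\mu(v)$ in the inner product, and nonnegativity by setting $g=f$ in \eqref{Finform} to obtain $\langle \Lambda_{\mu}F,F\rangle_{\mu} = \mu(\partial\graph_0)^{-1}\int_{\graph_0}(f')^2 \ge 0$. The only additions are your explicit remarks on well-definedness and the displayed Dirichlet-energy formula, which the paper leaves implicit.
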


\begin{proof}
The desired symmetry follows from \eqref{symm},
\[ \langle \Lambda _{\mu}F,G \rangle _{\mu} 
=  \frac{1}{\mu ( \partial \graph _0) }\sum_{v \in \partial \graph _0} [\mu (v)^{-1} \sum_{e \sim v} \partial _{\nu } f_e(v)] g(v) \mu (v) \]
\[ =  \frac{1}{\mu ( \partial \graph _0) }\sum_{v \in \partial \graph _0} g(v) [ \sum_{e \sim v} \partial _{\nu } f_e(v)]   
=  \frac{1}{\mu ( \partial \graph _0) }\sum_{v \in \partial \graph _0} f(v) [ \sum_{e \sim v} \partial _{\nu } g_e(v)] \]
\[ =  \frac{1}{\mu ( \partial \graph _0) }\sum_{v \in \partial \graph _0} f(v) [\mu (v)^{-1} \sum_{e \sim v} \partial _{\nu } g_e(v)]\mu (v) =  \langle F, \Lambda _{\mu}G \rangle _{\mu}.  \]

The lower bound for $\Lambda _{\mu} $ comes from \eqref{Finform} if $f = g$ and $f'' = 0$.
\end{proof}

Compressions of the Dirichlet to Neumann map can be associated to partitions of the graph boundary.
These compressions will help with a comparison of the Dirichlet to Neumann maps on the boundary of a graph completion $\partial \gbar $
with the boundary of certain finite subgraphs.
Suppose then that ${\cal E} = \{ E(n), n=1,\dots ,N \}$ is a partition of the boundary $\partial \graph _0$ of a finite graph.  
A subspace ${\vecsp}_{\cal E}$ of $\vecsp$ is given by the span of the characteristic functions of the sets $E(n)$.
Functions $F \in {\vecsp}_{\cal E}$ may be viewed as having domain ${\cal E}$, with values $F(E(n))$.
This subspace inherits the inner product
\begin{equation} \label{partip}
\langle F,G \rangle _{\cal E, \mu} = \frac{1}{\mu (\partial \graph _0)} \sum _n F(E(n))G(E(n)) \mu (E(n)).
\end{equation}

Let $P_{\cal E}$ denote the orthogonal projection from $\vecsp$ to ${\vecsp}_{\cal E}$.
A compressed Dirichlet to Neumann map $\Lambda _{\cal E,\mu }:{\vecsp}_{\cal E} \to {\vecsp}_{\cal E}$
is then given by $P_{\cal E}\Lambda _{\mu}P_{\cal E}$.  Denote by $\Lambda _{\cal E, \mu}: {\vecsp}_{\cal E} \to {\vecsp}_{\cal E}$
the restriction of $P_{\cal E}\Lambda _{\mu}P_{\cal E}$ to ${\vecsp}_{\cal E}$.
An orthonormal basis for ${\vecsp}_{\cal E}$ is $\{ 1_{E(n)}\sqrt{\mu (\partial \graph _0)/ \mu (E(n))} \}$, so
if $F \in {\vecsp}_{\cal E}$ has harmonic extension $f$, the compressed map is
\begin{equation} \label{fcompdef}
(\Lambda _{\cal E, \mu}F)(E(n))  = \mu (E(n))^{-1} \sum_{v \in E(n)} \sum_{e \sim v} \partial _{\nu}f_e(v). 
\end{equation}  
As a compression of $\Lambda _{\mu}$, the map $\Lambda _{\cal E, \mu}: {\vecsp}_{\cal E} \to {\vecsp}_{\cal E}$ is
self-adjoint and nonnegative.

\subsection{Extension to infinite graphs} 

An alternate description of the Dirichlet to Neumann map $\Lambda _{\mu }$ facilitates an extension to infinite graphs.
Suppose $f$ is continuous on finite graph $\graph _0$ and harmonic in the interior.
Let $E$ be a subset of $\partial \graph _0$, with $E^c = \partial \graph _0 \setminus E$.
Pick a smooth function $\phi :\graph _0 \to \real $ whose derivative, along each edge, vanishes at each vertex, with $\phi (v) = 1$ for $v \in E$,
and $\phi (v) = 0$ for $v \in E^c$. Then \eqref{IBP2} gives
\begin{equation} \label{Fweakform}
- \int_{\graph _0} f \phi '' = \sum_{v \in E} \sum_{e \sim v} \partial _{\nu} f_e(v) .
\end{equation}
The left side of \eqref{Fweakform} is available if $\gbar $ is a compact, weakly connected metric graph completion
and $E$ is a nonempty clopen subset of $\partial \gbar $. 
By \propref{Aprop} a function $\phi $ can be found in the algebra $\alg$ of eventually flat functions, with $\phi (x) = 1$ for $x \in E$ and
$\phi (x) = 0$ for $x \in E^c$.  
For a continuous function $F:\partial \gbar \to \real $ with harmonic extension $f$, and 
a measure $\mu $ satisfying $\mu (E) > 0$, define the Dirichlet to Neumann map of $F$ on the set $E$ as
\begin{equation} \label{DNdef}
\Lambda _{\mu} F (E) = - \mu (E)^{-1}  \int_{\graph } f \phi '' .
\end{equation}

\begin{lem} \label{nupos}
Assume that $E$ is a nonempty clopen subset of $\partial \gbar$, and $E^c = \partial \gbar \setminus E$.
Suppose $F: \partial \gbar \to \real$ is continuous,
\[ F(x) =  0,  \ x \in E, \quad F(x) > 0, \  x \in E^c,\]
and $f$ is the harmonic extension of $F$ to $\graph $.
If $\phi \in \alg$ satisfies $\phi (x) = 1$ for $x \in E$ and $\phi (x) = 0$ for $x \in E^c$, and   then 
\begin{equation} \label{basecase}
 \int_{\graph } f \phi '' > 0.
\end{equation}
\end{lem}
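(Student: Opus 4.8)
The plan is to interpret the integral $\int_{\graph} f \phi''$ via the integration-by-parts identity \eqref{Fweakform}, but since we are on an infinite graph $\gbar$ this must be done carefully by approximation through finite subgraphs. The key observation is that $\phi \in \alg$ is eventually flat: $\phi' = 0$ outside a finite collection of edges and near every vertex. Hence $\phi''$ is supported on a finite set of edges, and the integral $\int_{\graph} f \phi''$ is really an integral over a finite subgraph $\graph_0$ of $\graph$ containing the support of $\phi''$ together with enough of the graph to capture the boundary behavior. On such a $\graph_0$, with its boundary $\partial \graph_0$ chosen to include the relevant separating vertices, the finite-graph identity \eqref{IBP2} applies to $f$ (harmonic, so $f''=0$) and $\phi$, yielding that $-\int_{\graph} f\phi''$ equals a sum of boundary flux terms $\sum_{e \sim v} \partial_\nu f_e(v)$ weighted by the values of $\phi$.

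\textbf{Reducing to a sign computation.} First I would fix a finite subgraph $\graph_0$ large enough that $\phi''$ vanishes off $\graph_0$ and that $\graph_0$ resolves the clopen separation of $E$ from $E^c$; by \propref{partition} this is possible once the scale is small, since the relative boundary vertices each lie in a definite $\epsilon$-neighborhood $N_\epsilon(E)$ or $N_\epsilon(E^c)$. On $\graph_0$ the function $\phi$ takes value $1$ on the part of the boundary limiting to $E$ and value $0$ on the part limiting to $E^c$. Applying \eqref{Fweakform} (the specialization of \eqref{IBP2} to this $\phi$) gives
\[
-\int_{\graph} f \phi'' = \sum_{v \in E\text{-side}} \sum_{e \sim v} \partial_\nu f_e(v),
\]
the total outward flux of $f$ across the portion of $\partial \graph_0$ sitting over $E$. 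So the lemma reduces to showing this net flux is \emph{negative}, equivalently that $\int_{\graph} f \phi'' > 0$.

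\textbf{The sign via the maximum principle.} The decisive step is to pin down the sign of the flux using $F \ge 0$ with $F = 0$ exactly on $E$. Because $f$ is the harmonic extension, the maximum principle established after \eqref{diffharm} forces $f \ge 0$ throughout $\gbar$, with $f = 0$ on $E$ and $f > 0$ somewhere on $E^c$; moreover $f$ attains its minimum value $0$ on $E$. Near the $E$-side boundary vertices $f$ increases away from its zero boundary values into the interior, so the inward derivative is nonnegative and the outward normal derivative $\partial_\nu f_e(v)$ is correspondingly signed to make the net boundary flux over $E$ strictly negative. The strictness is the crux: I would argue that $f$ cannot vanish identically (since $F > 0$ on $E^c$ and $f$ is continuous with the stated boundary data), so by the maximum principle $f$ is not constant, and hence the flux out of the $E$-region is genuinely positive-magnitude, giving $\int_{\graph} f\phi'' > 0$ rather than merely $\ge 0$.

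\textbf{Expected obstacle.} I expect the main difficulty to be justifying the passage from the infinite graph to the finite identity \eqref{Fweakform} with full rigor — specifically, confirming that choosing $\graph_0$ to contain $\mathrm{supp}(\phi'')$ really does localize the integral and that the boundary vertices of $\graph_0$ can be assigned cleanly to the $E$-side or $E^c$-side so that the discrete vertex conditions \eqref{Kirchhoff} at interior vertices of $\graph_0$ match those inherited from $\graph$. The second subtlety is upgrading the nonstrict inequality (which follows softly from $f \ge 0$ and a flux argument) to the \emph{strict} inequality \eqref{basecase}; this is where I would lean hardest on the maximum-principle dichotomy, namely that a nonconstant harmonic $f$ with minimum $0$ attained only on the boundary set $E$ must have strictly positive interior flux across $E$.
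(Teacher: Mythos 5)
Your overall framework is the same as the paper's: localize $\int_{\graph} f\phi''$ to a finite subgraph where \eqref{Fweakform} applies, then extract the sign from the maximum principle. But the decisive step --- why the net flux over the $E$-side boundary is \emph{strictly} of one sign --- is asserted rather than proved, and the justification you give fails for the cut you chose. The $E$-side boundary vertices of your finite subgraph (e.g. the relative boundary of $\graph_{\epsilon}$ from \propref{partition}) are points of $\graph$, where $f$ is strictly positive; they are not points of $E$, where $f$ vanishes. There is no reason $f$ should increase along every cut edge into the interior of $\graph_{0}$, so the individual terms $\partial_{\nu} f_e(v)$ at an arbitrary cut can have either sign, and only their sum is constrained. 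Your final appeal to the principle that a nonconstant harmonic function ``with minimum $0$ attained only on the boundary set $E$ must have strictly positive interior flux across $E$'' is circular: on an infinite graph, defining and controlling the flux toward the ideal boundary set $E$ is precisely the content of the lemma, and you supply no mechanism for it.

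The paper closes this gap with a specific choice of cut: it truncates $\graph$ along a level set $f^{-1}(t)$ of the harmonic function itself. It first notes $f>0$ on $\graph$ (a nonconstant harmonic function with nonnegative boundary data), normalizes $\max f = 1$, and shows by a compactness argument that for $t$ small enough the level set $f^{-1}(t)$ lies within $\epsilon$ of $E$, inside the region where $\phi\equiv 1$ and $\phi'=0$; it then uses countability of the vertex set and of the edges where $f$ is constant to pick $t$ so that $f^{-1}(t)$ contains no vertices and no zeros of $f'$. Cutting along $f^{-1}(t)$ and trimming produces a finite graph $\graph_{2}$ that contains the support of $\phi''$ and on which $t$ is the \emph{minimum} value of $f$. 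Consequently every cut vertex $v\in f^{-1}(t)$ is a minimum point of $f$ on $\graph_{2}$ with $f$ linear and nonconstant on the incident edges, so each outward derivative satisfies $\partial_{\nu} f_e(v) < 0$ strictly, and \eqref{Fweakform} yields $\int_{\graph_{2}} f\phi'' = -\sum_{v\in f^{-1}(t)}\sum_{e\sim v}\partial_{\nu} f_e(v) > 0$. That device --- choosing the cut so that every boundary flux term is individually negative, rather than trying to control the sign of a sum over an arbitrary cut --- is the idea missing from your proposal.
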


\begin{proof}

The plan is to replace $\gbar$ with an approximating finite graph, where \eqref{Fweakform} can be used.
Note that $f(x) > 0$ for $x \in \graph $ since $f$ is a nonconstant harmonic function with nonnegative boundary values. 
With no loss of generality, assume that $\max_{x \in \gbar} f(x) = 1$.
Begin by choosing $\epsilon  > 0$ such that $\phi '(x) = 0$ when $d(x,\partial \gbar ) < \epsilon  $.

Since $\gbar $ is connected, the range of $f$ is $[0,1]$.  An argument by contradiction will show that if $t \in (0,1)$ is close enough to $0$,
then $d(x,E) < \epsilon $ for every $x$ in the level set $f^{-1}(t)$.  
So assume there is a sequence $t_n \to 0$ and points $x_n \in f^{-1}(t_n)$ with $d(x_n, E ) \ge \epsilon $. 
By compactness of $\gbar$ the sequence $\{x_n \}$ has a subsequential limit $z$, with $f(z) = 0$ by continuity of $f$.
Since $z \notin E^c$ and $d(z,E ) \ge \epsilon $, $z$ must be in $\graph $, which is impossible.  

Pick $t_0 \in (0,1)$ so that for all $t$ with $0 < t < t_0 $ the level sets  $f^{-1}(t)$ lie in the $\epsilon $ neighborhood of $E$. 
Since the vertex set of $\graph $ is countable, as is the set of edges $e$ where $f(x)$ is constant, it is possible to choose
$t$ so $f^{-1}(t)$ contains no vertices, and (since $f$ is linear on each edge) no points $x$ with $f'(x) = 0$.

Next define a graph $\graph _1$ whose vertices are (i) vertices of $\graph $ in $f^{-1}((t,1])$ and (ii) those points $x \in \graph $ with $f(x) = t$. 
Edges of $\graph _1$ include edges joining two vertices in $f^{-1}((t,1])$.  In addition,
if an edge $e$ of $\graph $ has vertices $v_1,v_2$ with $f(v_1) < t$ and $f(v_2) > t$, then 
replace $e$ with a new edge $e_1$ having vertices $x,v_2$ and add $e_1$ to $\graph _1$.

Finally, trim $\graph _1$ by retaining those edges, with their vertices, containing a point whose distance from $E^c$ is at least $\epsilon /2$
The resulting graph $\graph _2$ includes all points in $\graph $ where $\phi '' \not= 0$, so 
\[ \int_{\graph } f \phi '' = \int_{\graph _2} f \phi ''.\]
If necessary $t$ may be reduced so that $t$ is the minimum of $f$ on $\graph _2$.
Since $v \in f^{-1}(t)$ are the minima of $f$ on $\graph _2$, $\partial _{\nu} f(v) < 0$.
Finally, \eqref{Fweakform} gives
\[ \int_{\graph _2} f \phi '' = -\sum_{v \in f^{-1}(t)} \sum_{e \sim v} \partial _{\nu} f_e(v) > 0 .\]
\end{proof}

\thmref{rhoex} established the existence of certain measures $\rho$ generated by nested sequences ${\bf E} = \{ {\cal E}_n \}$ 
of finite clopen partitions of compact, nowhere dense sets $\Omega $, in particular for $\partial \gbar $.
Additional measures for $\partial \gbar $ may be defined using \eqref{basecase} on an extended boundary for $\graph $.

Pick a vertex $w \in \graph $ which will be treated as an added and distinguished boundary vertex.
To maintain focus on the original boundary, the new boundary will be denoted $\partial \gbar \cup \{ w \}$.
The function $1_{\{ w \}}$ defined on $\partial \gbar \cup \{ w \}$ will have a harmonic extension $f$
on the rest of $\gbar$.  For a clopen set $E \in \partial \gbar$, take $\phi \in \alg $ with $\phi (x) = 1$ in an open neighborhood of $E$ and $\phi (x) = 0$ 
in an open neighborhood of $E^c$ in $\{ w \} \cup \partial \gbar $.
Define 
\begin{equation} \label{nudef}
\nu _w (E) =  \int_{\graph } f \phi ''.
\end{equation}

\begin{thm} \label{nuex}
The function $\nu _w $ extends to a nonnegative Radon measure on $\partial \gbar$, with
$\nu _w (S) > 0$ for every nonempty clopen set $S \subset \partial \gbar $. 
\end{thm}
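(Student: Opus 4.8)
The plan is to mirror the proof of \thmref{rhoex}: I will show that $\nu _w$ is well defined and extends to a nonnegative premeasure on the algebra of clopen subsets of $\partial \gbar$, then invoke the Carath\'eodory extension theorem as in \cite[p.~30]{Folland}, together with the identification (\thmref{Borel}) of the generated $\sigma$-algebra with the Borel sets, to produce a Borel, hence Radon, measure on the compact metric space $\partial \gbar$. The positivity on nonempty clopen sets will be extracted from the maximum principle by way of the reciprocity identity \eqref{symm}.

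First I would verify that $\nu _w(E)$ does not depend on the admissible choice of $\phi \in \alg$. If $\phi _1$ and $\phi _2$ both equal $1$ near $E$ and $0$ near $E^c \cup \{w\}$, their difference $\psi = \phi _1 - \phi _2$ lies in $\alg$ and vanishes, together with its edge derivatives, in a neighborhood of the entire extended boundary $\{w\} \cup \partial \gbar$. Since $\psi$ is eventually flat, $\psi ''$ is supported in a finite subgraph, and applying \eqref{IBP2} with $g = \psi$ --- using $f'' = 0$, the Kirchhoff condition for $f$ at interior vertices, and the vanishing of $\psi$ and $\psi '$ on the relative boundary --- gives $\int _{\graph} f \psi '' = 0$. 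Thus $\nu _w$ is well defined on clopen sets. Finite additivity then follows at once: for disjoint clopen $E_1, E_2$ with admissible functions $\phi _1, \phi _2$, the sum $\phi _1 + \phi _2 \in \alg$ is admissible for $E_1 \cup E_2$, and linearity of the integral gives $\nu _w(E_1 \cup E_2) = \nu _w(E_1) + \nu _w(E_2)$.

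The main obstacle is positivity, that $\nu _w(E) > 0$ for every nonempty clopen $E$. Here I would reduce to a finite subgraph exactly as in \lemref{nupos}, cutting $\graph$ along a regular level set of the harmonic extension $f$ of $1_{\{w\}}$ so that the truncated graph contains $w$, carries all the support of $\phi ''$, and has a genuine finite boundary. On this finite graph the reciprocity identity \eqref{symm}, applied to $f$ and to the harmonic extension $g$ of $1_E$ normalized by $g(w) = 0$, converts $\nu _w(E) = \int _{\graph} f \phi ''$ into, up to sign, the net flux $\sum _{e \sim w} \partial _{\nu} g_e(w)$ of $g$ at $w$. Since $E$ is nonempty, $g$ is a nonconstant nonnegative harmonic function attaining its minimum value $0$ at the boundary vertex $w$; the strict maximum principle forces $g > 0$ on the interior of each edge incident to $w$, so every term $\partial _{\nu} g_e(w)$ has the strict sign making $\nu _w(E) > 0$. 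In effect this is \lemref{nupos} transported from a level set to the distinguished vertex $w$ through reciprocity, and the careful finite-graph truncation is the technical heart of the argument.

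Finally I would assemble the measure. The positivity just established gives nonnegativity on the algebra of clopen sets, and countable additivity is automatic: any clopen set $S$ is compact, so a disjoint clopen cover $S = \bigcup _j S_j$ is necessarily finite, reducing to finite additivity. Hence $\nu _w$ is a nonnegative premeasure on the algebra of clopen subsets of $\partial \gbar$. By \thmref{Borel} this algebra generates the Borel $\sigma$-algebra, so the Carath\'eodory extension produces a Borel measure $\nu _w$, which is Radon because $\partial \gbar$ is a compact metric space \cite[p.~210]{Folland}. Since every nonempty clopen $S \subset \partial \gbar$ already lies in the algebra with $\nu _w(S) > 0$, the extended measure assigns positive weight to each such set, completing the proof.
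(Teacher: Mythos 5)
Your skeleton matches the paper's proof almost step for step: independence of $\nu _w(E)$ from the admissible $\phi \in \alg$, finite additivity via $\phi _1 + \phi _2$, compactness of clopen sets reducing countable additivity to finite additivity, and the Carath\'eodory--Folland extension through \thmref{Borel} to a Radon measure are exactly how the paper proceeds (it disposes of the last steps by repeating \thmref{rhoex}, and your explicit well-definedness check is if anything more careful than the paper, which leaves it implicit). The one genuine divergence is positivity. The paper gets $\nu _w(E) > 0$ by citing \lemref{nupos} ``with the adjusted boundary'' $\partial \gbar \cup \{w\}$, i.e.\ directly from the level-set truncation and the sign of $\partial _{\nu} f$ at the cut vertices, where $f$ attains its minimum on the truncated graph. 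You instead route positivity through reciprocity, converting $\int _{\graph} f \phi ''$ into the flux at $w$ of the harmonic extension $g$ of $1_E$ with $g(w) = 0$. (In fairness, the paper's one-line citation is itself loose --- \lemref{nupos} assumes $F > 0$ on all of $E^c$, which $1_{\{w\}}$ does not satisfy --- so extra work of some kind really is needed here.)

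The problem is that your proof of the conversion has a genuine gap, located exactly at the step you call the technical heart. On the truncated graph the boundary is $\{w\}$ together with the cut vertices $v$, where $f(v) = t > 0$ and $g(v) \in (0,1)$: neither function attains there the boundary data that the formal computation on $\{w\} \cup \partial \gbar$ would use. Writing $\partial _{\nu} f(v)$ for $\sum_{e \sim v} \partial _{\nu} f_e(v)$, integration by parts and \eqref{symm} on the truncated graph give
\[
\int_{\graph} f \phi '' \;=\; - \sum_{\text{cuts}} \phi (v)\, \partial _{\nu} f(v),
\qquad
\sum_{\text{cuts}} g(v)\, \partial _{\nu} f(v) \;=\; \sum_{e \sim w} \partial _{\nu} g_e(w) \;+\; t \sum_{\text{cuts}} \partial _{\nu} g(v),
\]
so the identity $\nu _w(E) = - \sum_{e \sim w} \partial _{\nu} g_e(w)$ holds only up to two error terms, $\sum_{\text{cuts}} \bigl( g(v) - \phi (v) \bigr) \partial _{\nu} f(v)$ and $t \sum_{\text{cuts}} \partial _{\nu} g(v)$, which must be shown to vanish as $t \to 0^{+}$. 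This requires real estimates: that all $\partial _{\nu} f(v) < 0$ at the cuts, so $\sum_{\text{cuts}} |\partial _{\nu} f(v)|$ equals the fixed flux of $f$ at $w$ by conservation; that the cuts approach $\partial \gbar$, so $g(v) - \phi (v) \to 0$ uniformly by uniform continuity of $g$ on $\gbar$; and that $\sum_{\text{cuts}} \partial _{\nu} g(v)$ is independent of $t$, again by conservation of flux. None of this analysis appears in your proposal, and without it the claimed conversion is simply not an identity on the finite graph where \eqref{symm} is being applied.

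A smaller but real error: the strict maximum principle does \emph{not} force $g > 0$ on the interior of every edge incident to $w$. If $w$ has an edge to a degree-one vertex of $\partial \gbar \setminus E$, then $g$ vanishes at both endpoints, hence identically on that edge, and $\partial _{\nu} g_e(w) = 0$ there. The conclusion $\sum_{e \sim w} \partial _{\nu} g_e(w) < 0$ still holds, but it needs a propagation argument rather than an edgewise one: if $g$ vanished at every neighbor of $w$, then nonnegativity of $g$ and the averaging identity \eqref{diffharm} would force $g \equiv 0$ on all of $\graph$, contradicting $g = 1$ on the nonempty set $E$.
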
  

\begin{proof} 
With the adjusted boundary for $\gbar$, the positivity $\nu _w (E) > 0$ was established in \lemref{nupos}.

Let $B$ denote the algebra of clopen subsets of $\partial \gbar $.   
Suppose $E$ is the union of a finite collection of disjoint clopen sets, $E = \bigcup_{n=1}^N E(n) $.
There are functions $\phi _n \in \alg$ with $\phi _n(x) = 1$ in an open neighborhood of $E(n)$ and 
$\phi _n(x) = 0$ in an open neighborhood of $E^c$ in $\{ w \} \cup \partial \gbar $. 
Then \eqref{basecase} shows that 
\[\nu _w(E) =  \int_{\graph } f \phi '' =  \int_{\graph } f \sum_{n=1}^N \phi _n '' = \sum_{n=1}^N \nu_w(E(n)), \]
and $\nu _w$ is additive on $B$.   The $\sigma $-algebra $\cal B$ generated by $B$ is the Borel subsets of $\partial \gbar $.
As in \thmref{rhoex}, $\nu _x$ has a unique extension to a nonnegative Radon measure on $\cal B$.

\end{proof}

\begin{cor}
Suppose $w(1),w(2) \in \graph $.  Then there is a $C > 0$ such that $C \nu _{w(1)} \ge \nu _{w(2)}$ on $\partial \gbar$,
so that  $\nu _{w(1)}$ and $ \nu _{w(2)}$ are mutually absolutely continuous.
\end{cor}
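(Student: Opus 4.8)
The plan is to realize both exit measures as boundary fluxes of harmonic functions and to compare those fluxes by a maximum-principle estimate. Write $f_1$ for the harmonic extension of $1_{\{w(1)\}}$ on $\partial \gbar \cup \{w(1)\}$ (so $f_1 = 0$ on $\partial \gbar$, $f_1(w(1)) = 1$, and $f_1$ is harmonic on the rest of $\gbar$, in particular at $w(2)$), and let $f_2$ be the analogous function with the roles of $w(1)$ and $w(2)$ interchanged. As in the proof of \lemref{nupos}, both are strictly positive on $\graph$. Set $a = f_1(w(2))$ and $b = f_2(w(1))$; the maximum principle forces $a,b \in (0,1)$, since the value $0$ or $1$ at an interior point would make the nonconstant function constant.

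First I would record a single-test-function fact: the value $\nu _w(E) = \int_{\graph} f \phi ''$ is independent of the admissible choice of $\phi \in \alg$. Indeed, if $\phi,\psi \in \alg$ both equal $1$ near $E$ and $0$ near the complement of $E$ in $\{w\} \cup \partial \gbar$, then integration by parts applied to $\int_{\graph} f (\phi - \psi)''$ produces only vertex terms; these vanish because $f$ is harmonic ($f'' = 0$ on edges) and satisfies \eqref{Kirchhoff} at every interior vertex, while $\phi - \psi \in \alg$ has vanishing derivative at vertices. The argument tolerates altering $\phi$ near \emph{any} interior vertex, since Kirchhoff's condition holds there too. Hence I may fix one $\phi \in \alg$ with $\phi = 1$ near $E$ and $\phi = 0$ near $(\partial \gbar \setminus E) \cup \{w(1),w(2)\}$ (legitimate because $w(1),w(2) \in \graph$ are interior while $E \subset \partial \gbar$) and use this \emph{same} $\phi$ to compute both $\nu _{w(1)}(E) = \int_{\graph} f_1 \phi''$ and $\nu _{w(2)}(E) = \int_{\graph} f_2 \phi''$.

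Next I would compare. Consider $g = f_1 - a f_2$, which is continuous on $\gbar$, harmonic on $\graph _{int} \setminus \{w(1),w(2)\}$, and has enlarged-boundary values $g = 0$ on $\partial \gbar$, $g(w(2)) = a - a = 0$, and $g(w(1)) = 1 - ab > 0$. By the maximum principle, with $w(1),w(2)$ adjoined to the boundary, $g \ge 0$ on $\gbar$. Normalizing, $u = (1-ab)^{-1} g$ is exactly the harmonic function with $u = 0$ on $\partial \gbar \cup \{w(2)\}$ and $u(w(1)) = 1$, i.e.\ the integrand defining the exit measure from the source $w(1)$ in the graph $\graph '$ obtained by declaring $w(2)$ an additional boundary vertex. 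Since $\graph'$ is still compact and weakly connected and our $\phi$ meets the requirements for this modified problem, \thmref{nuex} applied to $\graph'$ gives $\int_{\graph} u \phi'' > 0$. Therefore
\[ \nu _{w(1)}(E) - a\, \nu _{w(2)}(E) = \int_{\graph} g \phi '' = (1-ab)\int_{\graph} u \phi '' > 0 \]
for every nonempty clopen $E \subset \partial \gbar$, so $\nu _{w(2)}(E) \le a^{-1} \nu _{w(1)}(E)$ on the algebra of clopen sets.

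Finally I would upgrade this to all Borel sets and symmetrize. Since the clopen sets form an algebra generating the Borel $\sigma$-algebra (\thmref{Borel}) and both measures are finite, the collection $\{A : \nu _{w(2)}(A) \le a^{-1}\nu _{w(1)}(A)\}$ is closed under increasing unions and decreasing intersections, hence a monotone class containing that algebra; by the monotone class theorem it is all of the Borel sets. Thus $C \nu _{w(1)} \ge \nu _{w(2)}$ with $C = a^{-1} > 0$, so $\nu _{w(2)} \ll \nu _{w(1)}$. Interchanging $w(1)$ and $w(2)$ throughout yields $\nu _{w(1)} \le b^{-1}\nu _{w(2)}$ and hence $\nu _{w(1)} \ll \nu _{w(2)}$, giving mutual absolute continuity. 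I expect the flux-positivity step to be the main obstacle: the pointwise bound $g \ge 0$ does not by itself control $\int_{\graph} g \phi ''$ because $\phi''$ changes sign, and the device that rescues it is recognizing the normalized $g$ as a genuine exit-measure integrand on the enlarged-boundary graph $\graph'$, so that the positivity already established in \lemref{nupos} and \thmref{nuex} applies directly, avoiding any fresh normal-derivative (boundary Harnack) estimate.
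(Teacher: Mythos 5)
Your proof is correct (at the paper's own level of rigor) and shares its core with the paper's argument: compare $f_1$ and $f_2$ through a linear combination vanishing on $\partial \gbar$, with the constant fixed by evaluation at $w(2)$, obtain strict positivity on nonempty clopen sets from the paper's flux-positivity machinery with interior vertices adjoined to the boundary, and then pass from the clopen algebra to all Borel sets. The differences lie in how the two technical steps are discharged. The paper picks $C$ so that $Cf_1 - f_2$ is strictly positive at \emph{both} $w(1)$ and $w(2)$ and cites \lemref{nupos} directly; you instead normalize so that $g = f_1 - a f_2$ also vanishes at $w(2)$, exhibiting $(1-ab)^{-1}g$ as exactly the \thmref{nuex} integrand for the graph with base boundary $\partial \gbar \cup \{w(2)\}$ and source $w(1)$. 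This buys the sharper constant $C = 1/f_1(w(2))$ (the paper needs $C$ strictly larger), and a citation whose stated hypotheses match the configuration being used: \lemref{nupos} literally requires $F > 0$ on all of $E^c$, which $Cf_1 - f_2$ fails on $\partial \gbar \setminus E$, so the paper's invocation is itself an off-label extension of the lemma, whereas your reduction reproduces the single-source structure that \thmref{nuex} asserts. (Both routes tacitly assume, as the paper does throughout, that the Dirichlet theory and positivity results persist when finitely many interior vertices are declared boundary vertices.) For the clopen-to-Borel upgrade, the paper uses Radon regularity together with \propref{manycor}, while your monotone class argument is an equally standard substitute. Finally, you make explicit two facts the paper needs but leaves tacit: the independence of $\int_{\graph} f \phi ''$ from the admissible choice of $\phi$ (so that one test function computes both measures, Kirchhoff's condition at the other source absorbing the alteration there), and the symmetrization that yields mutual, not merely one-sided, absolute continuity.
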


\begin{proof}
Pick points $w(1),w(2) \in \graph $.  
For $j = 1,2$ let $f_j$ be the harmonic extension of $1_{w(j)}:w(j) \cup \partial \gbar \to \real $ to $\gbar $.  
Since $0 < f_1(v) < 1$ for $v \in \graph $, there is a positive $C$ such that $Cf_1(w(2)) - f_2(w(2))  > 0$, while $Cf_1 - f_2  = 0$ on $\partial \gbar $.  
By \lemref{nupos}  $ C \nu _{w(1)} - \nu _{w(2)} > 0$ on each clopen subset of $\partial \gbar $.

A Radon measure $\mu $ has the following regularity properties \cite[p. 205]{Folland}:
(i) for all Borel sets $S$,
\[ \mu (S) = \inf \{ \mu (U), U \ {\rm open} \ {\rm and} \ E \subset U \},\]
and (ii) for all open $U$,
\[\mu (U) = \sup \{ \mu (K), K \ {\rm compact} \ {\rm and} \ K \subset U \}.\]
Suppose $K \subset \partial \gbar $ is compact and $\mu = C \nu _{w(1)} - \nu _{w(2)}$.
By \propref{manycor}, for any open set $U$ containing $K$ there is a clopen set
$E \subset U$ containing $K$.  By (i), $\mu (K) \ge 0$, by (ii) any open $U$ satisfies $\mu (U) \ge 0$, and by
(i) again $\mu (S) \ge 0$ for all Borel sets in $\partial \gbar $. 

\end{proof}

If ${\cal E} = \{ E(n), n = 1, \dots ,N \}$ is a finite partition of $\partial \gbar $ by clopen sets, the definition of  
the compressed Dirichlet to Neumann map may be extended to $\vecsp _{\cal E}$.   The inner product \eqref{partip}
and orthonormal basis $\{ 1_{E(n)}\sqrt{\mu (\partial \gbar)/ \mu (E(n))} \}$ are as before.  If $F \in \vecsp _{\cal E} $ has harmonic extension $f$,
then
\begin{equation} \label{bndmap}
 \Lambda _{{\cal E}, \mu }F(E(n)) = - \mu (E(n))^{-1} \int_{\graph } f\phi _n'' .
 \end{equation}

The next result will show that $\Lambda _{{\cal E},\mu} $ is the limit of 
compressed Dirichlet to Neumann maps on the finite subgraphs $\graph _{\epsilon}$ discussed in \propref{partition}.    
Using membership of boundary vertices of $\graph _{\epsilon}$ in the neighborhoods
$N_{\epsilon}(E(n))$, the partition $\cal E $ induces a corresponding partition ${\cal E}(\epsilon) = \{ \wt{E(n)}, n = 1, \dots ,N \}$ 
of $\graph _{\epsilon}$ for $\epsilon > 0$ and sufficiently small.  The partitions ${\cal E}(\epsilon)$ will inherit  
a measure $\mu $ from $\cal E$. The identification $\wt{E(n)}$ with $E(n)$ extends to the vector space ${\vecsp}_{\cal E}$.
For sufficiently small $\epsilon > 0$, $\Lambda _{{\cal E}(\epsilon), \mu }:{\vecsp}_{\cal E} \to {\vecsp}_{\cal E}$ 
will denote the compressed Dirichlet to Neumann map on $\partial \graph _{\epsilon}$.

\begin{thm} \label{Biglim}
\begin{equation} \label{Biglimeq}
\lim_{\epsilon \to 0^+} \Lambda _{{\cal E}(\epsilon), \mu } = \Lambda _{\cal E ,\mu}.
\end{equation}
Consequently, $\Lambda _{\cal E, \mu}:{\vecsp}_{\cal E} \to {\vecsp}_{\cal E}$ is self adjoint and nonnegative.
\end{thm}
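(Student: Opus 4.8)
The plan is to realize both the finite-graph map and its limit through the weak form \eqref{Fweakform}, using one fixed family of cutoffs, and then to pass to the limit by the maximum principle. Fix $F \in \vecsp_{\cal E}$ with harmonic extension $f$ on $\gbar$, and for each $n$ fix $\phi _n \in \alg$ with $\phi _n = 1$ in an open neighborhood of $E(n)$ and $\phi _n = 0$ in an open neighborhood of $E(n)^c$, as in \eqref{bndmap}. Since $\phi _n$ is locally constant near every point of $\partial \gbar$, there is a $\delta > 0$ with $\phi _n' \equiv 0$ on $\{ x : d(x,\partial \gbar ) < \delta \}$; hence $\phi _n''$ is supported in a compact set $K_n \subset \{ x : d(x,\partial \gbar ) \ge \delta \}$, which lies in the interior of $\graph _{\epsilon}$ once $\epsilon < \delta$. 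In particular $\int_{\graph} f \phi _n'' = \int_{\graph _{\epsilon}} f \phi _n''$ for all small $\epsilon$.

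First I would rewrite the finite-graph map as an integral. Let $f_{\epsilon}$ be the harmonic extension on $\graph _{\epsilon}$ of the data assigning the value $F(E(n))$ to every relative boundary vertex lying in $\wt{E(n)}$. By \propref{partition} each such vertex lies in $N_{\epsilon}(E(n))$, so for $\epsilon$ small enough $\phi _n = 1$ on $\wt{E(n)}$ and $\phi _n = 0$ on the remaining relative boundary vertices. Applying \eqref{Fweakform} on $\graph _{\epsilon}$ to $f_{\epsilon}$ and $\phi _n$ then gives
\[ \Lambda _{{\cal E}(\epsilon), \mu } F(\wt{E(n)}) = \mu (E(n))^{-1} \sum_{v \in \wt{E(n)}} \sum_{e \sim v} \partial _{\nu} (f_{\epsilon})_e(v) = - \mu (E(n))^{-1} \int_{\graph _{\epsilon}} f_{\epsilon}\, \phi _n'', \]
which parallels the defining formula \eqref{bndmap} for $\Lambda _{\cal E, \mu } F(E(n))$, with $f_{\epsilon}$ in place of $f$.

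The heart of the argument, and the step I expect to be the main obstacle, is the uniform bound $f_{\epsilon} \to f$ on $K_n$. The interior vertices of $\graph _{\epsilon}$ are non-leaf vertices of $\graph$ at which $f$ already satisfies \eqref{Kirchhoff}, so $f$ is harmonic in the interior of $\graph _{\epsilon}$ and $g_{\epsilon} = f_{\epsilon} - f$ is harmonic there as well. By the maximum principle $\sup_{\graph _{\epsilon}} |g_{\epsilon}|$ is attained among the relative boundary vertices (each component of $\graph _{\epsilon}$ carrying such a vertex since $\graph$ is connected and infinite). For $v \in \wt{E(n)}$ choose $z \in E(n)$ with $d(v,z) < \epsilon$; then $f(z) = F(E(n)) = f_{\epsilon}(v)$, so $|g_{\epsilon}(v)| = |f(z) - f(v)|$ is dominated by the modulus of continuity $\omega _f(\epsilon )$ of $f$ on the compact space $\gbar$. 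Thus $\sup_{\graph _{\epsilon}} |g_{\epsilon}| \le \omega _f(\epsilon ) \to 0$.

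Combining these facts, for $\epsilon < \delta$,
\[ \Bigl| \int_{\graph _{\epsilon}} f_{\epsilon} \phi _n'' - \int_{\graph} f \phi _n'' \Bigr| = \Bigl| \int_{K_n} (f_{\epsilon} - f) \phi _n'' \Bigr| \le \omega _f(\epsilon ) \int_{\graph} |\phi _n''| \longrightarrow 0, \]
which establishes \eqref{Biglimeq} entry by entry on the finite-dimensional space $\vecsp_{\cal E}$. Finally, each $\Lambda _{{\cal E}(\epsilon), \mu }$ is self-adjoint and nonnegative for the inner product \eqref{partip}, which does not depend on $\epsilon$ under the identification $\wt{E(n)} \leftrightarrow E(n)$; since self-adjointness and nonnegativity are closed conditions, the limit $\Lambda _{\cal E, \mu }$ inherits both.
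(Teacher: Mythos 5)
Your proof is correct and takes essentially the same route as the paper's: both compare the harmonic extension $f$ on $\gbar$ with the harmonic extension on $\graph_{\epsilon}$ using uniform continuity of $f$ plus the maximum principle, express the discrepancy of the two compressed maps as an integral of the difference against $\phi_n''$ (which is supported in a fixed compact set inside $\graph_{\epsilon}$ for small $\epsilon$), and then pass self-adjointness and nonnegativity to the limit. The only difference is expository: you spell out, via \eqref{Fweakform}, the identity $\Lambda_{{\cal E}(\epsilon),\mu}F(\wt{E(n)}) = -\mu(E(n))^{-1}\int_{\graph_{\epsilon}} f_{\epsilon}\,\phi_n''$, a step the paper uses implicitly when it writes the difference of the two maps as a single integral.
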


\begin{proof} Since ${\vecsp}_{\cal E}$ is finite dimensional, it suffices to check that 
\[\lim_{\epsilon \to 0^+} \Lambda _{{\cal E}(\epsilon), \mu }F = \Lambda _{\cal E,\mu}F\]
for basis vectors $F = 1_{E(n)}$ of $ {\vecsp}_{\cal E}$.
Let $f$ denote the harmonic extension of $F$ from $\partial \gbar$ to $\gbar$, while 
$f_1$ denotes the harmonic extension of $F$ from $\partial \graph _{\epsilon}$ to $\graph _{\epsilon}$.
By the uniform continuity of $f$ on the compact set $\gbar $, for every $\sigma > 0$
there is a $\delta $ such that $|f(x) - f(y)| < \sigma $ when $d(x,y) < \delta $.   

By \propref{Aprop} there are functions $\phi \in \alg$ with the value $1$ on $E(m)$ and $0$ on the complement in $\partial \gbar$.
Since $\phi ' = 0$ except on a finite subgraph of $\graph $, there is an $\epsilon _0 > 0$ such that, if $\epsilon < \epsilon _0 $, 
then $\phi '' = 0$ on $ \graph _{\epsilon} \setminus \graph _{\epsilon _0}$. 

Suppose $ 0< \epsilon < \epsilon _0$ and $\epsilon < \delta $.  For $y \in \partial \graph _{\epsilon}$ we have $|F(y) - f(y)| < \sigma $. 
By the maximum principle, $|f_1(x) - f(x)| < \sigma $ for all $x \in \graph _{\epsilon }$. Since $\phi ''(x) = 0$ for $x \in \gbar \setminus \graph _{\epsilon}$,
\[|\Lambda _{\cal E,\mu }F(E(m)) - \Lambda _{{\cal E}(\epsilon),\mu }F(E(m))| = |\mu (E(m))^{-1} \int_{\graph _{\epsilon}} (f - f_1) \phi ''| \]
\[\le \sigma \mu (E)^{-1} \int_{\graph _{\epsilon}} |\phi ''|. \]
The last integral does not change for $\epsilon < \epsilon _0$, and $\sigma \to 0$ as $\epsilon \to 0^+$.
In particular this shows that the matrices for the finite dimensional linear maps $ \Lambda _{{\cal E}(\epsilon),\mu } $
with respect to the given orthonormal basis converge to the matrix for $\Lambda _{\cal E,\mu }$
as $\epsilon \to 0^+$.  Since the matrices for $\Lambda _{\cal E(\epsilon),\mu }$ are self adjoint and nonnegative, the same holds 
for $\Lambda _{\cal E,\mu }$.

\end{proof}

 \section{Appendix: A Dirichlet problem counterexample}
 
Despite considerable work developing the theory of harmonic functions on infinite graphs \cite{Cartwright} \cite[p. 474-486]{LP},   
the author struggled to find simple examples in the literature showing that in some cases the Dirichlet problem is not solvable.   
This appendix provides such an example, showing that the Dirichlet problem may not be solvable if the compactness condition on $\gbar$ is relaxed.  

Start with a sequence of vertices $v_1,v_2,v_3, \dots $ with $v_n$ adjacent to $v_{n+1}$.
Assume that $d(v_n,v_{n+1}) = 1/n^2$.
For each $n >1$, add $M_n$ additional vertices $w_{n,m}$.  Values for $M_n$ will be chosen later.  
Each $w_{n,m}$ is adjacent to the vertex $v_n$, with $d(v_n,w_{n,m}) = 1$.  $\graph$ has no other vertices or edges.    
The completion $\gbar$ has one limit point $\omega = \lim_n v_n$.  
Note that $\gbar$ is weakly connected with finite diameter.

The boundary of $\gbar $ consists of $\{ \omega , v_1, w_{n,m} \}$.
As elements of $\partial \gbar$ the boundary points are all isolated, so 
a continuous function on the boundary is given by assigning any real values to these points.  

A simple argument shows that for suitably chosen $M_n$ there is no harmonic function $f$ with $f(\omega ) \not= 0$, but with $f$ vanishing 
at the other boundary points.  First consider the case when $f$, which must be linear on each edge, is $0$ on the edge $(v_1,v_2)$.
Since $f$ is zero at $v_2$ and at each vertex $w_{2,m}$, $f$ is also zero on each edge $(v_2,w_{2,m})$.  
The derivative condition at $v_2$ forces $f$ to vanish on $(v_2,v_3)$, by induction $f$ is the zero function on $\graph $,
and $f(\omega ) = \lim_{n \to \infty} f(v_n) = 0$.

If $f$ is not $0$ on $(v_1,v_2)$, then after rescaling we may assume $f(v_2) = 1$.
Looking outward from $v_2$, the derivatives have the value $-1$ on the edges 
$(v_1,v_2)$ and $(v_2,w_{2,m})$.  The derivative on $(v_2,v_3)$ must be $1+ M_2$,
and so $f(v_3) = f(v_2) + (1+M_2)/2^2$.  For subsequent vertices 
$f(v_{n+1}) > f(v_n) + M_n/n^2$, so for $M_n$ sufficiently large, $f(v_n) \to \infty $.
Thus $f$ cannot be continuously extended to $\omega$.

\bibliographystyle{amsalpha}

\end{document}